\newtheorem{theorem}{Theorem}[chapter]
\newtheorem{proposition}[theorem]{Proposition}
\newtheorem{corollary}[theorem]{Corollary}
\theoremstyle{remark}
\newtheorem{remark}[theorem]{Remark}
\definecolor{refkey}{rgb}{0,0,1}
\definecolor{labelkey}{rgb}{1,0,0}
\newenvironment{phantomequation}[1][]{\refstepcounter{equation}}{}
\newenvironment{claim}[1][{\textup{(\theequation)}}]{\refstepcounter{equation}\vglue10pt
\begin{trivlist}
\item[{\hskip\labelsep#1}]}{\vglue10pt\end{trivlist}}
\numberwithin{equation}{chapter}
\newcommand{\sC}{\mathscr{C}}
\newcommand{\bR}{\mathbb{R}}
\newcommand{\bC}{\mathbb{C}}
\newcommand{\W}{\mathsf{W}}
\newcommand{\T}{\mathsf{T}}
\newcommand{\N}{\mathsf{N}}
\newcommand{\corr}{\mathsf{corr}}
\newcommand{\Long}{\mathsf{long}}
\newcommand{\short}{\mathsf{short}}
\renewcommand{\Im}{\operatorname{Im}}
\newcommand{\Ai}{\operatorname{Ai}}
\newcommand{\Hess}{\operatorname{Hess}}
\newcommand{\x}{\mathsf{x}}
\newcommand{\eff}{{\nu}}
\title{Pointwise Spectral Asymptotics out of the Diagonal near Degeneration \thanks{\emph{2010 Mathematics Subject Classification}: 35P20.}\thanks{\emph{Key words and phrases}: Microlocal Analysis, sharp  spectral asymptotics.}}
\author{Victor Ivrii\thanks{This research was supported in part by National Science and Engineering  Research Council (Canada) Discovery Grant  RGPIN 13827}}
\begin{document}
\maketitle

\begin{abstract}
We establish uniform (with respect to $x$, $y$) semiclassical asymptotics and estimates for the Schwartz kernel $e_h(x,y;\tau)$ of spectral projector for a second order elliptic operator inside domain under microhyperbolicity (but not $\xi$-microhyperbolicity) assumption. While such asymptotics for  its restriction to the diagonal $e_h(x,x,\tau)$ and, especially, for its trace $\N_h(\tau)= \int e_h(x,x,\tau)\,dx$ are well-known, the out-of-diagonal asymptotics are much less explored, especially uniform ones.

Our main tools: microlocal methods, improved successive approximations and geometric optics methods.
\end{abstract}

\chapter{Introduction}
\label{sect-1}

In this paper we consider a self-adjoint scalar operator  which is elliptic second order differential operator with $\sC^K$-coefficients ($K=K(d,\delta ))$ 
\begin{gather}
A=\sum_{j,k} \bigl(hD_j-V_j(x)\bigr) g^{jk}(x)\bigl(hD_k-V_k(x)\bigr)+V(x),
\qquad g^{jk}=g^{kj}, 
\label{eqn-1.1}\\
\intertext{and under microhyperbolicity condition}
|V(x)-\tau| +|\nabla V(x)|\ge \epsilon _0 \qquad \forall x\in B(0,1)\subset \bR^d, \ d\ge 2,
\label{eqn-1.2}
\end{gather}
we establish sharp (with $O(h^{1-d})$ remainder) asymptotics of the Schwartz kernel of the spectral projector $e_h(x,y,\tau)$ as $h\to +0$. 

Under $\xi$-microhyperbolicity assumption 
\begin{gather}
|V(x)-\tau| \ge \epsilon _0 \qquad \forall x\in B(0,1)\subset \bR^d
\label{eqn-1.3}
\end{gather}
such asymptotics of $e_h(x,x,\tau)$ is well-known for much more general operators including matrix ones (with a proper definition of $\xi$-microhyperbolicity).  However out-of-diagonal asymptotics, especially uniform with respect to $x,y$, are much less explored and for simple main part and sharp remainder estimate in addition to $\xi$-microhyperbolicity 
\begin{gather}
|a(x,\xi) -\tau|+|\nabla_\xi a(x,\xi)|\ge \epsilon_0
\label{eqn-1.4}
\end{gather}
 where $a(x,\xi)$ is the principal symbol) require strong convexity of the energy surface $\{\xi\colon a(x,\xi)=\tau\}$; see \cite{OOD}, Theorem~\ref{OOD-thm-1.1}.

On the other hand,   under only microhyperbolicity condition 
\begin{gather}
|a(x,\xi) -\tau|+\nabla_{x,\xi} a(x,\xi)|\ge \epsilon_0
\label{eqn-1.5}
\intertext{(which could be generalized for matrix operators) sharp asymptotics of}
\N_h(\tau)=\int e_h(x,x,\tau)\,dx
\label{eqn-1.6}
\end{gather}
 are well-known (see \cite{monsterbook}, Chapter \ref{monsterbook-sect-4})\,\footnote{\label{foot-1}  And even near the boundary  (see \cite{monsterbook}, Chapter \ref{monsterbook-sect-7}).}. However in this case  even uniform asymptotics of 
 $e_h(x,x,\tau)$ are much less explored and only for operator (\ref{eqn-1.1}) (see  Subsection~\ref{monsterbook-sect-5-3-1},   Subsubsection~``\nameref{monsterbook-sect-5-3-1-3}'')  because while microhyperbolicity conditions does not allow \emph{short periodic trajectories}, it allows \emph{short loops}\footnote{\label{foot-2} But $\xi$-microhyperbolicity does not allow short loops.}.  

\begin{theorem}\label{thm-1.1}
Consider elliptic self-adjoint operator \textup{(\ref{eqn-1.1})} with smooth coefficients. Assume that microhyperbolicity condition \textup{(\ref{eqn-1.2})}
is fulfilled.
Then asymptotics 
\begin{gather}
e_h(x,y,\tau)=e_h^\W (x,y,\tau) +\left\{\begin{aligned}
&O(h^{1-d}) \quad && d\ge 3,\\
&O(h^{-\frac{4}{3}}) && d=2
\end{aligned}\right.
\label{eqn-1.7}\\
\intertext{holds for all   $x,y\in B(0,\epsilon )$  with}
e_h^\W (x,y;\tau)=(2\pi h)^{-d}  \int _{\{a(\frac{1}{2}(x+y), \xi)<\tau\}} e^{ih^{-1}\langle x-y,\xi\rangle}\,d\xi,
\label{eqn-1.8}\\
a(x,\xi)= \sum_{j,k} g^{jk}(x)\xi_j\xi_k +V(x).
\label{eqn-1.9}
\end{gather}
\end{theorem}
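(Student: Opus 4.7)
The first step is a standard Tauberian reduction in the spirit of the monsterbook, Chapter~\ref{monsterbook-sect-4}. Writing $u(x,y,t)$ for the Schwartz kernel of $e^{-ih^{-1}tA}$, I would relate $e_h(x,y,\tau)$ to the semiclassical Fourier transform
\[
F_{t\to h^{-1}\tau}\bigl(\bar\chi_T(t)\,u(x,y,t)\bigr),
\]
where $\bar\chi_T$ is a smooth cutoff of the form $\bar\chi(t/T)$. Microhyperbolicity (\ref{eqn-1.2}) rules out short \emph{periodic} trajectories through any point of the energy surface $\{a=\tau\}$, so by propagation of singularities one obtains that for a suitable constant $T=T_0$ the tail contribution is $O(h^s)$ for any $s$, uniformly in $(x,y)\in B(0,\epsilon)$. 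This reduces the proof of (\ref{eqn-1.7}) to an analysis of $u(x,y,t)$ on the fixed interval $|t|\le T_0$, modulo the small-loop contribution addressed below.

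For $|t|\le T_0$ I would construct $u(x,y,t)$ by the improved successive approximation method announced in the abstract, starting from the free parametrix frozen at the midpoint $z=\tfrac12(x+y)$ (i.e.\ using the symbol $a(z,\xi)$), which is precisely what makes the Weyl midpoint formula (\ref{eqn-1.8}) the natural main term. Substituting this construction into the Tauberian formula and integrating the $\xi$-variable by stationary phase recovers $e_h^W(x,y;\tau)$ plus correction terms; a careful bookkeeping of the successive approximations shows that each correction contributes at most $O(h^{1-d})$, uniformly in $(x,y)$. This gives the $d\ge 3$ estimate. At this stage the principal complication compared with the diagonal case is that the phase $h^{-1}\langle x-y,\xi\rangle$ does not vanish, so one must track how derivatives in $\xi$ (arising from the Weyl quantization at $z$) interact with the oscillatory factor; this is handled by integration by parts combined with the usual $|x-y|\lesssim h$ vs.\ $|x-y|\gg h$ dichotomy.

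The case $d=2$ is the main obstacle. Here microhyperbolicity allows \emph{short loops}, i.e.\ trajectories returning to the spatial point $y$ with a different momentum, and these loops give a contribution that is worse than $O(h^{-1})$. I would follow the standard rescaling strategy: split the $t$-integration into a short scale $|t|\le T_* = h^{1/3}$ (handled by the successive approximation / stationary phase analysis above) and an intermediate scale $T_*\le |t|\le T_0$, where one invokes microhyperbolicity together with a finer phase-space partition. The $T\sim h^{1/3}$ balance arises from weighing the measure of near-loop directions (which in $d=2$ is one-dimensional and controlled by the curvature of $\{a(z,\cdot)=\tau\}$ at the relevant point) against the uncertainty in $t$; optimising yields the remainder $O(h^{-4/3})$. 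The hard part is proving that this heuristic is sharp by a genuine stationary-phase-with-degeneracy argument (Airy-type behavior near the loop), and ensuring that all estimates are uniform in $(x,y)\in B(0,\epsilon)$, including the degenerate cases where $(x,y)$ lies on or near a loop.
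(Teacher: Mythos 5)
Your Tauberian skeleton matches the paper's, but the mechanism you invoke to justify it is the wrong one, and this is a genuine gap rather than a cosmetic difference. Absence of short \emph{periodic} trajectories is what controls the trace $\N_h(\tau)$; for the pointwise kernel one must bound $F_{t\to h^{-1}\tau}\bigl(\bar{\chi}_T(t)u_h(x,y,t)\bigr)$ uniformly up to the degenerate set $V\approx\tau$, where short \emph{loops} from $x$ to $y$ do exist and do contribute — they cannot be disposed of by propagation of singularities at a fixed constant time $T_0$, and the tail is never shown (nor needs) to be $O(h^s)$. The paper instead proves, via the $x_1$-microhyperbolicity propagation estimate of Proposition~\ref{prop-3.1} and rescaling, that only times $T\lesssim\max(\sqrt{\ell(x,y)},h^{1/3})$ matter (see (\ref{eqn-3.10}), with $\ell$ defined by (\ref{eqn-1.12})), establishes (\ref{eqn-3.7})--(\ref{eqn-3.9}), and represents the propagator on those times by the oscillatory integral of Proposition~\ref{prop-3.3}, obtained by Fourier transforming in $x_1,y_1$ and running geometric optics in the dual variable — not by a midpoint-frozen successive approximation. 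Your successive-approximation bookkeeping with stationary phase in $\xi$ works under the $\xi$-microhyperbolicity (\ref{eqn-1.3}); under (\ref{eqn-1.2}) alone the $\xi$-phase degenerates exactly where $V\approx\tau$ (so $\xi\approx0$ on the energy surface), and the claim that ``each correction contributes at most $O(h^{1-d})$'' is precisely what needs proof. In the paper it comes from the toy-model (Airy) comparison and the bound $O\bigl(h^{1-d}\ell^{(d-3)/2}\bigr)$ of (\ref{eqn-2.26}) and Corollary~\ref{corollary-3.5}, which is also where the dimensional dichotomy originates.

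Relatedly, your explanation of the $d=2$ exponent locates the loss in the wrong place. The Tauberian estimate itself holds with remainder $O(h^{1-d})=O(h^{-1})$ for all $d\ge2$; the exponent $-\tfrac{4}{3}$ appears only when the Tauberian expression is replaced by the Weyl expression in the zone $\ell(x,y)\lesssim h^{2/3}$, where both $e_h$ and $e^\W_h$ are of size $h^{-2d/3}$ — harmless for $d\ge3$ but equal to $h^{-4/3}$ for $d=2$ (Proposition~\ref{prop-2.6} and its transfer in Section~\ref{sect-3}). Your balance of ``near-loop directions against uncertainty in $t$'' at $T\sim h^{1/3}$ gestures at the same Airy scale, but without the $\ell$-dependent rescaling $x\mapsto\ell^{-1}x$, $h\mapsto\hbar=\ell^{-3/2}h$ and the explicit comparison of Tauberian and Weyl expressions near the degeneration, the uniformity in $(x,y)$ — the actual content of Theorem~\ref{thm-1.1} — is not established. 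So the plan is reasonable in outline but is missing the central ingredients that make the proof go through near $V=\tau$.
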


\begin{theorem}\label{thm-1.2}
Consider elliptic self-adjoint operator \textup{(\ref{eqn-1.1})} with smooth coefficients. Assume that microhyperbolicity condition \textup{(\ref{eqn-1.2})}
is fulfilled. Let $d=2$ and $x,y\in B(0,\frac{1}{2})$.
\begin{enumerate}[wide,label=(\roman*),  labelindent=0pt]
\item
Asymptotics
\begin{gather}
e_h(x,y,\tau)=e_h^\W (x,y,\tau) + O(h^{-1})
\label{eqn-1.10}
\end{gather}
holds if \underline{either} condition \textup{(\ref{eqn-1.3})} is fulfilled, \underline{or} $\ell(x,y) \ge h^{\frac{1}{3}}$,
\underline{or} $\ell(x,y) \ge h^{\frac{2}{5}}$ and
\begin{multline}
\bigl|   4V(x-\tau)V(y-\tau) -|x-y|^2_{(g_{jk})} |\nabla V|_{(g^{jk})}^2 +\langle x-y, \nabla V\rangle^2 \bigr|\\
\ge \epsilon \ell(x,y)^2 
\label{eqn-1.11}
\end{multline}
where
\begin{gather}
\ell(x,y) =\max (|x-y|,\, |V(x)-\tau|,\, |V(y)-\tau|,\, h^{\frac{2}{3}}),
\label{eqn-1.12}
\end{gather}
$(g_{jk})=(g_{jk})^{-1}$ and $\nabla V$ is calculated at $\frac{1}{2}(x+y)$.

\item
Without condition \textup{(\ref{eqn-1.11})}  for $h^{\frac{1}{3}}\ge \ell(x,y)\ge h^{\frac{2}{5}}$ the following estimate holds:
\begin{gather}
e_h(x,y,\tau)=O(h^{-\frac{2}{3}}\ell (x,y)^{-1}).
\label{eqn-1.13}
\end{gather}

\item
Further, for $\ell(x,y)\le h^{\frac{2}{5}}$   asymptotics 
\begin{gather}
e_h(x,y,\tau)=e_h^\W (x,y,\tau) +e_{\corr, k, h} (x,y,\tau)+ O(h^{-1}) 
\label{eqn-1.14}
\end{gather}
holds   with  correction term $e_{\corr, k, h} (x,y,\tau)$ to be defined by \textup{(\ref{eqn-2.46})}   later  if
\underline{either}   condition \textup{(\ref{eqn-1.11})}  is fulfilled \underline{or} $\ell (x,y)\le h^{\frac{4}{9}}$.

\item
Without condition \textup{(\ref{eqn-1.11})}  for $h^{\frac{2}{5}}\ge \ell(x,y)\ge h^{\frac{4}{9}}$ the following estimate holds:
\begin{gather}
e_h(x,y,\tau)= e_{\corr, k, h} (x,y,\tau)+ O( h^{-\frac{5}{3}}\ell(x,y)^{\frac{3}{2}}) .
\label{eqn-1.15}
\end{gather}
\item
Finally, as $x=y$ one can replace $k$ by $0$ while preserving remainder estimate  and define  $e_{\corr, 0, h} (x,y,\tau)$  as
\begin{gather}
2h^{-\frac{8}{3}}  \int_{-\infty}^\tau (\tau-\tau') \Ai (-2h^{-\frac{2}{3}} ( \tau' - V(x))\,d\tau' - \frac{1}{2\pi h^2 } (\tau-V(x))_+
\label{eqn-1.16}
\end{gather}
provided $|\nabla V|_{g^{jk}}=1$ where $\Ai(\cdot)$ denotes Airy function.
\end{enumerate}
\end{theorem}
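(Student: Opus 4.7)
The plan is to apply the Fourier Tauberian method with a short-time parametrix for $U_h(t)=e^{ih^{-1}tA}$. Microhyperbolicity~(\ref{eqn-1.2}) forbids short periodic trajectories, so (as in \cite{monsterbook}, Chapter~\ref{monsterbook-sect-4}) one has
\[
e_h(x,y,\tau) = h^{-1}\!\int e^{ih^{-1}t\tau}\chi_T(t)\, U_h(t,x,y)\,dt + O(h^{1-d})
\]
for a Tauberian cutoff $\chi_T$ with $T\ge\epsilon$. I then study the integral on the right for $|t|\lesssim T_0\ll 1$, representing $U_h(t,x,y)$ as a semiclassical oscillatory integral generated by the classical Hamilton flow of $a=|\xi|^2_g+V$, and perform stationary phase in $(t,\xi)$.

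For part~(i), this reduces, modulo $e_h^\W(x,y,\tau)$, to an oscillatory integral over the $1$-dimensional energy surface $\Sigma_m=\{\xi:a(m,\xi)=\tau\}$ at the midpoint $m=\tfrac{1}{2}(x+y)$, with phase $h^{-1}\langle x-y,\xi\rangle$. In two dimensions $\Sigma_m$ is an ellipse that shrinks to a point as $V(m)\to\tau$. Up to the cubic correction $\langle x-y,\nabla V\rangle^2$, the left-hand side of~(\ref{eqn-1.11}) equals $4(V(m)-\tau)^2-|x-y|^2_{g}|\nabla V|^2_{g^{jk}}$, which is exactly the squared separation of the two critical points of the restricted phase on $\Sigma_m$; geometrically its vanishing means the segment $[x,y]$ is tangent at $m$ to $\{V=\tau\}$. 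Under~(\ref{eqn-1.3}) the ellipse does not shrink and stationary phase is uniform; more generally, whenever $\ell\ge h^{1/3}$, or whenever $\ell\ge h^{2/5}$ and~(\ref{eqn-1.11}) holds, the two critical points are separated by $\gg h^{1/3}$ (the Airy scale), and ordinary stationary phase produces the $O(h^{-1})$ remainder in~(\ref{eqn-1.10}). When~(\ref{eqn-1.11}) fails and $h^{2/5}\le\ell\le h^{1/3}$, the two points coalesce inside the Airy layer, giving the weaker bound~(\ref{eqn-1.13}).

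For $\ell(x,y)\le h^{2/5}$ the critical points cannot be separated at all, and must be treated jointly through an Airy normal form on $\Sigma_m$. Refining the short-time parametrix by $k$ successive approximations and absorbing the sub-principal corrections into the amplitude defines the correction $e_{\corr,k,h}$ of~(\ref{eqn-2.46}), yielding parts~(iii) and~(iv): with~(\ref{eqn-1.11}), or in the range $\ell\le h^{4/9}$ where the Airy asymptotics is uniformly non-degenerate on its own, the remainder stays $O(h^{-1})$; otherwise the two Airy critical points are too close and one only obtains~(\ref{eqn-1.15}). Part~(v) is the diagonal case: when $x=y$ the phase $\langle x-y,\xi\rangle$ vanishes identically, the angular integration over $\Sigma_m$ is trivial, and the remaining radial integration produces exactly~(\ref{eqn-1.16}), so $k=0$ suffices.

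The main obstacle is the uniform tracking of every error term across the three scale regimes $\ell\ge h^{1/3}$, $h^{2/5}\le\ell\le h^{1/3}$, and $\ell\le h^{2/5}$, and ensuring that the short-time parametrix, the Tauberian cutoff $T=T(x,y)$, and the stationary-phase / Airy model match on the overlaps. Identifying condition~(\ref{eqn-1.11}) as the sharp non-coalescence hypothesis, and choosing the successive-approximation order $k$ so that $e_{\corr,k,h}$ absorbs precisely the leading missing term in the Airy layer, is what pins down the delicate thresholds $h^{2/5}$ and $h^{4/9}$.
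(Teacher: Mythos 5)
The broad outline — Tauberian reduction, oscillatory-integral parametrix, stationary phase with regular/singular/shadow zones — is the right skeleton, but two concrete gaps mean the proposal would not produce the theorem.

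First, you misidentify the parameter $k$. You describe it as the ``order of successive approximations'' in the parametrix, but in the paper $k$ is a fixed geometric datum of the operator: after the normalization $\textup{(\ref*{eqn-3.25})}_{1-3}$ it is $k=V_{2\,x_1}(0)$, the $x_1$-derivative of the transversal component of the magnetic potential at the base point. The correction $e_{\corr,k,h}$ in \textup{(\ref{eqn-2.46})} is not obtained by iterating a parametrix; it is the \emph{exactly computed} Tauberian-minus-Weyl difference for the generalized toy-model $\bar{A}_{k,h}$ in \textup{(\ref{eqn-2.31})}, namely the explicit one-dimensional oscillatory integral \textup{(\ref{eqn-2.44})} in $\beta$. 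Saying ``absorbing sub-principal corrections into the amplitude defines $e_{\corr,k,h}$'' is circular: the whole difficulty of statements (iii)--(iv) is to produce a concrete closed-form object, and the proposal never does so.

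Second, your reduction step is qualitatively different from (and weaker than) what the paper needs. You propose a global stationary-phase reduction in $(t,\xi)$ of the Hamiltonian-flow parametrix, landing on an integral over the midpoint energy curve $\Sigma_m$ with phase $h^{-1}\langle x-y,\xi\rangle$. But that is essentially the Weyl term $e^\W_h$ itself, and near the degeneracy $V\approx\tau$ the energy curve collapses, so stationary phase in $\xi$ alone cannot isolate the correction term or detect the long trajectories. The paper instead applies a \emph{partial} Fourier transform $F_{x_1\to h^{-1}\xi_1,\,y_1\to -h^{-1}\eta_1}$ and uses $x_1$-microhyperbolicity (which holds even where $\xi$-microhyperbolicity fails) to solve the eikonal and write $F_{t\to h^{-1}\tau}u_h$ as an oscillatory integral in $(\xi_1,\eta_1,\xi')$ (Proposition~\ref{prop-3.3}), whose four stationary points are the short and long Hamiltonian rays from $x$ to $y$, matching the toy-model computation \textup{(\ref{eqn-2.7})}--\textup{(\ref{eqn-2.10})}. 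This is what makes the comparison with $\bar{A}_{k,h}$ quantitative (phase error $O(T^5)$, amplitude error $O(T^2)$, claim \textup{(\ref{eqn-3.26})}) and produces the precise thresholds $h^{1/3}$, $h^{2/5}$, $h^{4/9}$. Your geometric reading of \textup{(\ref{eqn-1.11})} is also off: it is the discriminant governing whether real stationary points in $(\xi_1,\eta_1,\xi')$ (equivalently, real Hamiltonian trajectories from $x$ to $y$ on energy $\tau$) exist, written in the paper as $4(V(x)-\tau)(V(y)-\tau)$, not $4(V(m)-\tau)^2$; the ``tangency at $m$'' picture belongs to the boundary of the shadow zone, not to the Airy-coalescence picture you assign it. Without the toy-model comparison and the correct identification of $k$, statements (iii)--(v) cannot be recovered from the proposal.
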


\begin{remark}\label{remark-1.3}
As $d=1,2$ and $x=y$ asymptotics are already   known:   see \cite{monsterbook}, Subsection~\ref{monsterbook-sect-5-3-1},   Subsubsection~\nameref{monsterbook-sect-5-3-1-4}).
\end{remark}

\paragraph{Plan of the paper.}
\emph{General idea.} Our main method is to prove (as $d\ge 2$) \emph{Tauberian asymptotics}
\begin{gather}
e_h(x,y,\tau) = e^\T_ {T,h} (x,y,\tau)+ O(h^{1-d})
\label{eqn-1.17}\\
\intertext{with the  \emph{Tauberian expression}}
e^\T_ {T,h} (x,y,\tau)=\frac{1}{h}\int_{-\infty}^\tau F_{t\to h^{-1}\tau} \Bigl(\bar{\chi}_T(t)u_h(x,y,t)\Bigr)\,d\tau 
\label{eqn-1.18}
\end{gather}
where $u_h(x,y,t)$ is the \emph{propagator} (the Schwartz kernel of $e^{ih^{-1}t A_h}$), and  $\bar{\chi}_T(t)=\bar{\chi}(T^{-1}t)$ is the appropriate cut-off. Then from Tauberian expression we pass to Weyl expression and a correction term if needed. So far our arguments are similar to those of \cite{OOD} where we considered points near boundary but assumed $\xi$-microhyperbolicity. 

To prove Tauberian asymptotics  we need to prove that Fourier transform in (\ref{eqn-1.18})  with $T\asymp 1$ is $O(h^{1-d})$.

 \emph{Section 2.}  We start from the toy-model (\ref{eqn-2.1}), study Hamiltonian trajectories and then do all calculations explicitly. We arrive to the oscillatory integral which has four stationary points, corresponding to  two trajectories from $x$ to $y$ and two trajectories from $y$ to $x$ on the energy level $\tau$, provided we are in the \emph{regular zone} where  the left-hand expression in (\ref{eqn-1.11}) without absolute value is positive: two are short trajectories and two are long ones. If the same expression is negative (\emph{shadow zone}) there are neither any trajectories nor stationary points, and if (\ref{eqn-1.11}) fails (\emph{singular zone}) these points almost coincide.

Then we repeat these steps for the generalized toy-model (\ref{eqn-2.31}).

\emph{Section 3.} We follow the same path, but we get semi-explicit expression: using special coordinates and $x_1$-microhyperbolicity we construct in the standard way 
$F_{x_1\to h^{-1}\xi_1,y_1\to- h^{-1}\eta_1} u_h(x,y,t)$ as an oscillatory integral and then make inverse Fourier transform. Then we prove (\ref{eqn-1.17}) and either pass from Tauberian expression to Weyl expression (as $\ell(x,y) \ge h^{\frac{2}{5}}$) and estimate the error, or use the perturbation method (as $\ell(x,y)\le h^{\frac{2}{5}}$)
and estimate
\begin{gather*}
\bigl(e^\T _{T,h} (x,y,\tau) - \bar{e}^\T _{T,k, h} (x,y,\tau) \bigr)- \bigl(e^\W_h (x,y,\tau)- \bar{e}^\W_{k,h} (x,y,\tau)\bigr)
\end{gather*}
where $\bar{e}^\T _{T,k, h} (x,y,\tau)$ and $\bar{e}^\W_{k,h} (x,y,\tau)$ correspond to an appropriate generalized toy-model. Then we define correction term as
\begin{gather*}
e_{\corr, k,h} = \bigl(\bar{e}^\T _{T,k, h} (x,y,\tau) - \bar{e}^\W _{k, h} (x,y,\tau) \bigr).
\end{gather*}

\emph{Section 4.}
Here we show how simple rescaling arguments allows to get rid off assumption (\ref{eqn-1.3}). We also discuss easier case $d=1$.

\chapter{Toy-model}
\label{sect-2}

Let us consider a toy-model operator with $d\ge 2$, $g^{jk}=\updelta_{jk}$, $V_j(x)=0$ and $V(x)=-x_1$:
\begin{gather}
\bar{A}_h \coloneqq \frac{1}{2}h^2D^2 - x_1 =\frac{1}{2} h^2D_1^2 + \frac{1}{2} h^2D^{\prime\,2} -x_1
\label{eqn-2.1}
\end{gather}
 By shift $x_1\mapsto x_1 + \tau$ we can reduce $\tau\in \bR$ to $\tau=0$. After this by rescaling $x\mapsto h^{\frac{2}{3}}$, $\tau\mapsto h^{\frac{2}{3}}$ we can reduce $h>0$ to $h=1$.
 
\section{Hamiltonian trajectories}
\label{sect-2.1}

\begin{proposition}\label{prop-2.1}
Consider toy-model operator \textup{(\ref{eqn-2.1})}. Then
\begin{enumerate}[wide,label=(\roman*),  labelindent=0pt]
\item
Hamiltonian trajectories from $(\bar{x},\bar{\xi})=(\bar{x},\bar{\xi}_1,\bar{\xi}')$, 
$\bar{x}_1>0$ on the energy level $0$ (so, $\bar{\xi}_1 =\mp \sqrt{2\bar{x}_1 -|\bar{\xi}'|^2}$) are 
\begin{gather*}
\xi '=\bar{\xi}', \ x_1= \bar{x}_1+\frac{ t^2}{2} \mp t\sqrt{2\bar{x}_1 -|\bar{\xi}'|^2},\ x'=\bar{x}'+\bar{\xi}' t ,\ \xi_1=t\mp \sqrt{2\bar{x}_1-|\bar{\xi}'| ^2}.
\end{gather*}
\item
If $d=2$ and $\pm \bar{\xi}_1< 0$ their projections to $x$-space are parabolas
\begin{gather*}
x_1 = \frac{1}{2\bar{\xi}_2^2} \Bigl(x_2-\bar{x}_2 \mp \bar{\xi}_2\sqrt{2\bar{x}_1-\bar{\xi}_2^2}\Bigr)^2+\frac{\bar{\xi}_2^2}{2}
\end{gather*}
 which at  ${t=\pm \frac{2\bar{x}_1}{\sqrt{2\bar{x}_1- \bar{\xi}_2 ^2}}}$  are tangent to the parabola
 $\Gamma =\{x_1=\frac{1}{4\bar{x}_1 }|x'-\bar{x}_2|^2\}$.

\item
Any point $x$ above $\Gamma$ (that is with $x_1>\frac{1}{4\bar{x}_1 }(x_2-\bar{x}_2^2$) is covered by two such rays, one of then touches $\Gamma$ between $\bar{x}$ and  $x$, and another outside of this segment. Any point $x$ below $\Gamma$ (that is with $x_1<\frac{1}{4\bar{x}_1 }(x_2-\bar{x}_2)^2$) is not reachable from $\bar{x}$ by rays on the energy level $0$, and any point on $\Gamma$ is reachable by just one ray (when point above $\Gamma$ tends to $\Gamma$ both rays tend to the same limit).

\item
The vertices of these parabolas are on the ellipse \\
$L=\{(2x_1-\bar{x}_1)^2 + (x_2-\bar{x}_2)^2 =\bar{x}_1^2\}$; between $\Gamma $ and $L$ along both rays $\xi_1$ have the same sign (opposite to one of $\bar{xi}_1$) while inside $L$ (brown on Figure~\ref{fig-1}) for long rays this is true only for a long ray, while for a short ray sign of $\xi_1$ coincides with the sign of $\bar{\xi}_1$. \end{enumerate}
\end{proposition}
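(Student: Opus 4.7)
My strategy is to integrate Hamilton's equations in closed form and then extract each geometric claim from the resulting explicit formulas. Hamilton's equations for $a(x,\xi)=\frac{1}{2}|\xi|^2-x_1$ reduce to $\dot{x}_j=\xi_j$, $\dot{\xi}_1=1$, $\dot{\xi}_j=0$ for $j\ge 2$; integrating from $(\bar{x},\bar{\xi})$ and imposing the energy condition $\frac{1}{2}|\bar{\xi}|^2=\bar{x}_1$ (which pins $\bar{\xi}_1=\mp\sqrt{2\bar{x}_1-|\bar{\xi}'|^2}$) gives part (i) immediately.

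For part (ii), I restrict to $d=2$, eliminate $t=(x_2-\bar{x}_2)/\bar{\xi}_2$ from the expression for $x_1$, and complete the square in $(x_2-\bar{x}_2)$ to obtain the stated parabola. Tangency with $\Gamma=\{x_1=(x_2-\bar{x}_2)^2/(4\bar{x}_1)\}$ is then a slope-matching exercise: the trajectory has slope $dx_1/dx_2=\xi_1/\bar{\xi}_2$ and $\Gamma$ has slope $(x_2-\bar{x}_2)/(2\bar{x}_1)$, and imposing slope equality together with incidence with $\Gamma$ singles out $t=\pm 2\bar{x}_1/\sqrt{2\bar{x}_1-\bar{\xi}_2^2}$.

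For part (iii), I fix a target $x$ and (for fixed sign of $\bar{\xi}_1$) view the condition ``the trajectory with parameter $\bar{\xi}_2$ passes through $x$'' as a polynomial equation in $\bar{\xi}_2^2$ after eliminating $t$. A short computation shows its discriminant is a positive multiple of $x_1-(x_2-\bar{x}_2)^2/(4\bar{x}_1)$, yielding two rays above $\Gamma$, one (tangent) ray on $\Gamma$, and none below. Comparing the arrival time $t_x$ with the tangency time $t_\ast=\pm 2\bar{x}_1/\sqrt{2\bar{x}_1-\bar{\xi}_2^2}$ for each root distinguishes the long ray ($0<t_\ast<t_x$, tangency between $\bar{x}$ and $x$) from the short ray ($t_\ast>t_x$, tangency past the target).

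For part (iv), the vertex of each trajectory occurs at $\xi_1=0$, i.e., $t_v=-\bar{\xi}_1$, with position $(\bar{\xi}_2^2/2,\ \bar{x}_2\pm\bar{\xi}_2\sqrt{2\bar{x}_1-\bar{\xi}_2^2})$; substituting gives $(2x_1-\bar{x}_1)^2+(x_2-\bar{x}_2)^2=\bar{x}_1^2$, so all vertices lie on $L$. Since $\xi_1=t+\bar{\xi}_1$ is monotone in $t$ and vanishes at $t_v$, the sign of $\xi_1$ at the arrival time depends only on $\operatorname{sgn}(t_x-t_v)$, and the case analysis reduces to comparing $t_x$ with $-\bar{\xi}_1$ for each of the two $\bar{\xi}_2$-roots. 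The geometric translation is that in the crescent between $\Gamma$ and $L$ both roots satisfy $t_x>t_v$ (so $\xi_1$ is opposite in sign to $\bar{\xi}_1$ along both rays), while inside $L$ the long ray still satisfies $t_x>t_v$ (opposite sign) and the short ray satisfies $t_x<t_v$ (same sign as $\bar{\xi}_1$). \emph{The main obstacle} is precisely this last correlation: showing that as $x$ crosses $L$, the arrival time $t_x$ of the short ray crosses $t_v=-\bar{\xi}_1$. This follows because $L$ is exactly the locus of vertices, so the short ray's vertex coincides with $x$ precisely when $x\in L$, making $L$ the boundary between ``short ray has turned at $x$'' and ``short ray has not yet turned at $x$.''
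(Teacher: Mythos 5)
The paper gives no argument at all here---its ``proof'' reads \emph{``Easy proof by direct calculations is left to the reader''}---so your plan of integrating Hamilton's equations explicitly and then eliminating $t$ is exactly the intended route, and the computational core of your parts (i)--(iii) is correct: after eliminating $t$ and squaring, the incidence condition becomes a quadratic in $\bar{\xi}_2^2$ (equivalently in $t^2$) whose discriminant is $(x_2-\bar{x}_2)^4\bigl(4x_1\bar{x}_1-(x_2-\bar{x}_2)^2\bigr)$, a positive multiple of $x_1-(x_2-\bar{x}_2)^2/(4\bar{x}_1)$, and the vertex substitution in (iv) indeed yields $(2x_1-\bar{x}_1)^2+(x_2-\bar{x}_2)^2=\bar{x}_1^2$.

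The one thin spot is the step you yourself flag in (iv). From ``$L$ is the locus of vertices'' you conclude that the short ray through $x$ has its vertex at $x$ precisely when $x\in L$; but $x\in L$ only says $x$ is the vertex of \emph{some} ray of the family, and you still must check that this ray is the \emph{shorter} of the two through $x$. Add the one-line verification: at a vertex, $x_1=\tfrac{1}{2}\bar{\xi}_2^2$ and $(x_2-\bar{x}_2)^2=\bar{\xi}_2^2(2\bar{x}_1-\bar{\xi}_2^2)$, hence $4x_1\bar{x}_1-(x_2-\bar{x}_2)^2=\bar{\xi}_2^4$, and the arrival time of that ray satisfies $t^2=2\bar{x}_1-\bar{\xi}_2^2=2(x_1+\bar{x}_1)-2\sqrt{4x_1\bar{x}_1-(x_2-\bar{x}_2)^2}$, i.e.\ it is the smaller root of your quadratic, so the vertex ray is indeed the short ray and your continuity argument across $L$ closes. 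A smaller bookkeeping caveat: for targets with $x_1$ well above $\bar{x}_1$ the short ray belongs to the family with the opposite sign of $\bar{\xi}_1$, and its tangency with $\Gamma$ occurs at negative time; so your dichotomy ``$t_\ast<t_x$ versus $t_\ast>t_x$'' should be phrased as ``tangency point inside versus outside the traversed arc,'' and the sign statements in (iv) should be made relative to each ray's own $\bar{\xi}_1$ rather than tacitly assuming both rays come from the same sign family (this looseness is already present in the proposition's wording, but your write-up should not inherit it silently).
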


\begin{figure}[h!]
\centering
\begin{tikzpicture}[xscale=8, yscale=8]
\fill[brown!20] (1,0) arc (0:180:.5) ;

\draw[thick, ->] (0,0)--(0,1) node[right]  {$x_2$};
\clip (-.1,-.1) rectangle (1.1,1.1);
\foreach \c in {0,.1,...,1,1}
{\draw[domain=0:2,samples=100]  plot ({1+\x*\x -2*\x*sqrt(1-\c*\c)}, {\c*\x});
}

\foreach \c in {0,.1,...,1}
{\draw[domain=0:2,samples=100]  plot ({1+\x*\x +2*\x*sqrt(1-\c*\c)}, {\c*\x});
}

\foreach \c in {.6}
\draw[red, thick, domain=0:2,samples=100]  plot ({1+\x*\x -2*\x*sqrt(1-\c*\c)}, {\c*\x});

\foreach \c in {.8}
\draw[blue, thick, domain=0:2,samples=100]  plot ({1+\x*\x -2*\x*sqrt(1-\c*\c)}, {\c*\x});

\draw[magenta, domain=0:2,samples=100, thick] plot ({\x^2}, {\x});

\end{tikzpicture}
\caption{\label{fig-1} $d=2$. Hamiltonian rays from $\bar{x}$  as $\bar{\xi}_2>0$, $t>0$, scaled.}
\end{figure}
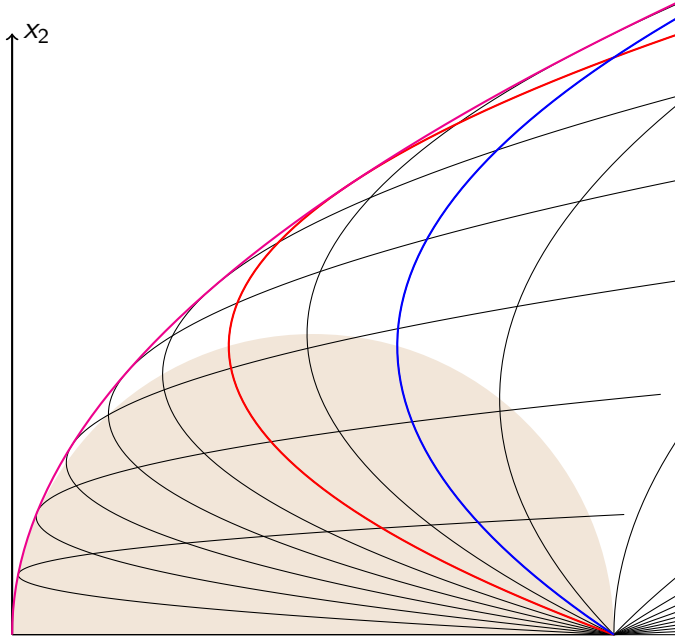

\begin{proof}
Easy proof by direct calculations is left to the reader.
\end{proof}

\begin{remark}\label{remark-2.2}
Observe that for a trajectory between points $x$ and $y$
\begin{enumerate}[wide,label=(\roman*),  labelindent=0pt]
\item
For a long trajectory $\ell(x,y)\asymp |x'-y'| +\eff(x) +\eff(y)$ with $\eff(x)= |V(x)-\tau| $ and $T(x,y)\asymp \ell^{\frac{1}{2}}(x,y)$ is a time to reach.

\item
\underline{Either} $\ell(x,y)\asymp \ell^0(x,y)\coloneqq |x-y|$ and $T^0(x,y)\asymp T(x,y)$ where $T^0(x,y)$ is time to reach along short trajectory,
\underline{or} $|\xi_{\Long} -\xi_{\short}| \asymp \eff(x)^{\frac{1}{2}} $ and $|\eta_{\Long} -\eta_{\short}| \asymp \eff(y)^{\frac{1}{2}} $, where $\xi_*$ and $\eta_*$ correspond to these trajectories at $x$ and $y$ for $|t|\asymp T(x,y)$ or $T^0(x,y)$.

\item
Further, since $\xi'_{\short} \ell^0 = \xi'_{\Long} \ell$, we conclude  that
\begin{gather}
\ell^0<\epsilon \ell\implies |\xi'_{\short}| > \epsilon^{-1}|\xi'_{\Long}|.
\label{eqn-2.2}
\end{gather}
\end{enumerate}
\end{remark}

\section{Calculations}
\label{sect-2.2}

As usual, consider $u^\pm_h(x,y,t)= u_h(x,y,t)\uptheta(\pm t)$ with  Heaviside function $\uptheta$; then
\begin{gather}
\Bigl( hD_t -\frac{1}{2}  h^2 D_1^2 -\frac{1}{2} h^2 |D'|^2+ x_1\Bigr) u^\pm _h = \mp i h\updelta(t)\updelta(x-y). \label{eqn-2.3}\\
\intertext{making $h$-Fourier transform}
\widehat{u_h^\pm} (\xi,y,\tau) \coloneqq F_{t\to h^{-1}\tau , x\to h^{-1}\xi }u^\pm _h= \iint e^{-ih^{-1}(t\tau +\langle x,\xi\rangle)} u^\pm_h(x,y,t)\,dxdt
\notag\\
\intertext{we get as $\mp \Im \tau >0$}
\Bigl(\tau - \frac{1}{2} \xi_1^2 - \frac{1}{2} |\xi'|^2 + ih \partial _{\xi_1}\Bigr) \widehat{u_h^\pm} = \mp i  e^{ih^{-1}(-y_1\xi_1-\langle y',\xi'\rangle)}   ;
\label{eqn-2.4}\\[7pt]
\intertext{rewriting it as}
\partial_{\xi_1} \Bigl(e^{-ih^{-1}((\tau-|\xi'|^2 /2 ) \xi_1-\xi_1^3/6)} \widehat{u_h^\pm} \Bigr)= \mp e^{ih^{-1}(-y_1\xi_1-\langle y',\xi'\rangle - (\tau-|\xi'|^2/2) \xi_1 +\xi_1^3/6)},
\notag
\end{gather}
we arrive to
\begin{multline}
 \widehat{u_h^\pm} (\xi,y,\tau)\\
 =  \mp  e^{ih^{-1}((\tau-|\xi'|^2/2) \xi_1-\xi_1^3/6)}\int _{\mp \infty}^{\xi_1} e^{ih^{-1}(-y_1\eta_1-\langle y',\xi'\rangle - (\tau-|\xi'|^2/2)\eta_1+\eta_1^3/6)}\,d\eta_1
 \label{eqn-2.5}
 \end{multline}
 and making partial inverse  $h$-Fourier transform
 \begin{multline}
 F_{t\to h^{-1}\tau} u_h^\pm 
 = \mp (2\pi h)^{-d}\\
 \iint \Bigl(\int _{\mp \infty}^{\xi_1} e^{ih^{-1}(x_1\xi_1-y_1\eta_1+\langle x'-y', \xi'\rangle  +( \tau -|\xi'|^2/2) (\xi_1-\eta_1) -(\xi_1^3-\eta_1^3)/6))}\,d\eta_1\Bigr)\,  d\xi_1d\xi'.
 \label{eqn-2.6}
\end{multline}
Then as $\tau \in \bR$
\begin{multline}
 F_{t\to h^{-1}\tau} u_h  
 = (2\pi h)^{-d}\times \\
 \iint \Bigl(\int _{- \infty}^{\infty} e^{ih^{-1}(x_1\xi_1-y_1\eta_1+\langle x'-y', \xi'\rangle + ( \tau -|\xi'|^2/2 ) (\xi_1-\eta_1) -(\xi_1^3-\eta_1^3)/6))}\,d\eta_1\Bigr)
\, d\xi_1d\xi'  \\
= h^{(1-2d)/3 } J ( h^{-\frac{2}{3}}\tau ,h^{-\frac{2}{3}} x_1, h^{-\frac{2}{3}} y_1,h^{-\frac{2}{3}} (x'-y')),
\label{eqn-2.7}
\end{multline}
with
\begin{multline}
J (\tau   ,   x_1,    y_1,  z')\coloneqq   (2\pi  )^{-d}\times \\
\iiint   e^{i (x_1\xi_1-y_1\eta_1+\langle z', \xi'\rangle  +( \tau - |\xi'|^2/2) (\xi_1-\eta_1) -(\xi_1^3-\eta_1^3)/6))}\,
\,d\xi_1 d\eta_1 d\xi'.
\label{eqn-2.8}
\end{multline}
We can see easily that the phase function 
\begin{multline}
\Phi (x_1,y_1,z'; \xi_1,\eta_1,\xi';\tau) \\
= x_1\xi_1-y_1\eta_1+\langle z', \xi'\rangle  +( \tau -\frac{1}{2}|\xi'|^2) (\xi_1-\eta_1) -\frac{1}{6}(\xi_1^3-\eta_1^3))
\label{eqn-2.9}
\end{multline}
has four stationary points (with respect to $(\xi_1,\eta_1,\xi')$): on all of them
\begin{gather}
2x_1=-2\tau +\xi_1^2+|\xi'|^2,\quad 2y_1=-2\tau +\eta_1^2+|\xi'|^2,\quad z'=  \xi' (\xi_1-\eta_1),
\label{eqn-2.10}\\
\intertext{and for corresponding trajectories  on the energy $\tau$}
T(x,y)= |\xi_1-\eta_1|
\label{eqn-2.11}
\end{gather}
but  
\begin{enumerate}[label=(\alph*)]
\item
on two of them $|\xi_1-\eta_1|$ is the smallest (short rays)   and \\
  $\ell^0(x,y)  \asymp |x_1-y_1|\asymp |x-y| $ with   $x_1\ge 0$, $y_1\ge 0$.
  \item
on the other two $|\xi_1-\eta_1|$ are largest  (long rays) $\xi_1$ and $\eta_1$ have opposite signs  and\\   $\ell (x,y) \asymp x_1+y_1\asymp |x-y|+ x_1+y_1$ with $x_1\ge 0$, $y_1\ge 0$. 
\end{enumerate}

Then we arrive easily to the following proposition:

\begin{proposition}\label{prop-2.3}
For a toy-model operator \textup{(\ref{eqn-2.1})} with $d\ge 1$   the following estimate holds
\begin{gather}
|e^\T _{\epsilon_1,h}(x,y,\tau) - e^\T _{T,h}(x,y,\tau) |\le Ch^{\frac{1-2d}{3}}
\label{eqn-2.12}
\end{gather}
with $T=\max(\epsilon _2\sqrt{\ell(x,y)}, \, h^{\frac{1}{3}-\delta})$. 

In particular, the right-hand expression is $O(h^{1-d})$ for $d\ge 3$,  $O(h^{-\frac{4}{3}})$ for $d=2$ and  $O(h^{-\frac{2}{3}})$ for $d=1$.
\end{proposition}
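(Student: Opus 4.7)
}
My plan is to exploit the explicit oscillatory integral representation for $F_{t\to h^{-1}\tau}u_h$ derived in (\ref{eqn-2.6})--(\ref{eqn-2.8}), combined with a non-stationary phase argument on the support of $\bar{\chi}_{\epsilon_1}-\bar{\chi}_T$, using the classification of stationary points of the phase $\Phi$ in (\ref{eqn-2.9}).

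First I reduce to $h=1$ by the rescaling $x\mapsto h^{\frac{2}{3}}x$, $\tau\mapsto h^{\frac{2}{3}}\tau$ indicated after (\ref{eqn-2.1}); under this rescaling $t\mapsto h^{\frac{1}{3}}t$ and the cutoff scale $T$ becomes $Th^{-\frac{1}{3}}\ge h^{-\delta}$. The overall Jacobian in passing from $e^\T_{T,h}$ to its rescaled counterpart produces precisely the prefactor $h^{\frac{1-2d}{3}}$ that appears in (\ref{eqn-2.12}), so it suffices to prove the $h=1$ estimate with the rescaled $T$.

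Next I write the difference as an iterated integral in $(t,\tau')$ and interchange the order of integration using that $\bar{\chi}_{\epsilon_1}-\bar{\chi}_T$ vanishes near $t=0$:
\begin{gather*}
e^\T_{\epsilon_1,1}(x,y,\tau)-e^\T_{T,1}(x,y,\tau)
= \int (\bar{\chi}_{\epsilon_1}-\bar{\chi}_T)(t)\,u_1(x,y,t)\,\frac{i\,e^{-it\tau}}{t}\,dt.
\end{gather*}
Now I substitute for $u_1$ the inverse $t$-Fourier transform of the formula (\ref{eqn-2.7})--(\ref{eqn-2.8}), so that the integrand becomes an oscillatory integral in the variables $(\xi_1,\eta_1,\xi',\tau'')$ with combined phase $\Phi(x,y,x'-y';\xi_1,\eta_1,\xi';\tau'')+t(\tau''-\tau)$. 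On stationary points with respect to $(\xi_1,\eta_1,\xi',\tau'')$ the system (\ref{eqn-2.10}) holds together with $t=\xi_1-\eta_1=\pm T(x,y)$, i.e.\ these points describe precisely the four Hamiltonian trajectories of Remark~\ref{remark-2.2} with travel time $\asymp\sqrt{\ell(x,y)}$.

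Choosing the constant $\epsilon_2$ in $T=\max(\epsilon_2\sqrt{\ell(x,y)},h^{\frac{1}{3}-\delta})$ larger than the implicit constants in Proposition~\ref{prop-2.1} and Remark~\ref{remark-2.2} ensures $|t|\ge T$ on the support of $\bar{\chi}_{\epsilon_1}-\bar{\chi}_T$ is strictly bigger than all four stationary travel times $|\xi_1-\eta_1|$. Non-stationary phase (integration by parts in $(\xi_1,\eta_1)$, where $\partial_{\xi_1,\eta_1}\Phi=t-(\pm\sqrt{2x_1-|\xi'|^2})$, etc.) then yields arbitrary-order decay in the gap $|t|-T(x,y)\gtrsim T$. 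The factor $\frac{1}{t}$ is benign since $|t|\ge T$, and the lower bound $T\ge h^{\frac{1}{3}-\delta}$ at the rescaled scale guarantees that every integration by parts loses at most $h^\delta$, so that an $O(1)$ bound at the rescaled scale, and hence $O(h^{\frac{1-2d}{3}})$ at the original scale, is obtained.

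The main obstacle I foresee is the coalescence of the two short stationary points as $\ell^0(x,y)\to 0$ and, more seriously, of short and long stationary points when $\ell$ becomes small: then standard non-stationary phase in $(\xi_1,\eta_1)$ loses derivatives. However this coalescence only occurs at $|t|\asymp\sqrt{\ell}$, and since the cutoff $\bar{\chi}_{\epsilon_1}-\bar{\chi}_T$ excludes this regime by a fixed multiplicative gap $\epsilon_2$, we may work away from all stationary points uniformly. The threshold $T\ge h^{\frac{1}{3}-\delta}$ exists precisely to avoid the microlocal caustic zone at $t\asymp h^{\frac{1}{3}}$ (where Airy-type behaviour would dominate); outside it the elementary non-stationary-phase analysis goes through and delivers the claimed bound.
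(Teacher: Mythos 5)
There is a genuine gap, and it sits exactly where the content of the proposition lies. Your argument rests on taking $\epsilon_2$ ``larger than the implicit constants'' so that all four stationary travel times are pushed below the support of $\bar{\chi}_{\epsilon_1}-\bar{\chi}_T$, after which everything is non-stationary phase. But $\epsilon_2$ here is a small constant (the paper's $\epsilon$'s are small, its $C_0$'s large), and the proposition is precisely about a cut-off $T=\max(\epsilon_2\sqrt{\ell},h^{\frac13-\delta})$ that does \emph{not} clear the trajectories: the long trajectories have travel time $\asymp\sqrt{\ell}$ with an absolute constant, so for small $\epsilon_2$ (and likewise in the regime $\ell\asymp h^{\frac23-2\delta}$, where $\sqrt{\ell}\asymp h^{\frac13-\delta}\asymp T$) the corresponding stationary points of $\Phi$ lie inside $\supp(\bar{\chi}_{\epsilon_1}-\bar{\chi}_T)$, and there is no ``fixed multiplicative gap'' separating them from the cut-off scale. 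This is exactly the worst case that determines the right-hand side of (\ref{eqn-2.12}): the paper's (implicit) proof estimates the retained stationary-point contributions of the explicit integral (\ref{eqn-2.7})--(\ref{eqn-2.9}) (the same computation that is spelled out in the proof of Proposition~\ref{prop-2.4}, with Hessian eigenvalues $\asymp|\xi_1-\eta_1|\gtrsim h^{\frac13}$, including the degenerate/Airy case $\ell\asymp h^{\frac23}$ and the singular zone), rather than eliminating them. With your choice of a large $\epsilon_2$ you are proving a different, essentially trivial statement (an $O(h^s)$ bound for a larger cut-off), not (\ref{eqn-2.12}); note also that the subsequent refinement in Proposition~\ref{prop-2.4} takes the even smaller $T=C_0\sqrt{\ell^0}$ precisely because stationary points remain in the discarded time range and must be estimated, not excluded.

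There is also a bookkeeping error in your rescaling step. Under $x\mapsto h^{\frac23}x$, $\tau\mapsto h^{\frac23}\tau$ the Tauberian expressions scale like $h^{-\frac{2d}{3}}$, not $h^{\frac{1-2d}{3}}$; the factor $h^{\frac{1-2d}{3}}$ in (\ref{eqn-2.7}) is the scaling of $F_{t\to h^{-1}\tau}u_h$ alone, before the extra $h^{-1}\int d\tau'$ (which contributes $h^{-1}\cdot h^{\frac23}$). So an $O(1)$ bound at unit scale only yields $O(h^{-\frac{2d}{3}})$ --- which, incidentally, is exactly the bound that the ``in particular'' conclusions of the proposition use ($O(h^{-\frac43})$ for $d=2$, $O(h^{-\frac23})$ for $d=1$, $O(h^{1-d})$ for $d\ge3$), and is what the stationary-phase estimate in the worst case $\ell\asymp h^{\frac23}$ actually delivers --- but it is not the $O(h^{\frac{1-2d}{3}})$ you claim to have obtained, and your non-stationary-phase route cannot produce the needed extra smallness in the regimes where stationary points survive the cut-off. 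To repair the proof you must keep the stationary points, classify them as in (\ref{eqn-2.10})--(\ref{eqn-2.11}) and Remark~\ref{remark-2.2}, and bound their contributions (including coalescing ones near the boundary of the shadow zone and at the Airy scale) by stationary-phase/volume estimates, as the paper does.
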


It shows that case $d=2$ requires special consideration. In this case
\begin{multline*}
\partial_\tau J(\tau, x_1,y_1, z_2)\\
= \frac{i}{4\pi^2} \iiint (\xi_1-\eta_1) e^{i  (x_1\xi_1- y_1\eta_1 +z_2\xi_2  +(\tau -\xi_2^2/2) (\xi_1-\eta_1)  -(\xi_1^3-\eta_1^3)/6)}\,d\xi_1d\eta_1d\xi_2\
\end{multline*}
which after substitution $\xi_1=\alpha+\beta$, $\eta_1=\alpha-\beta$  becomes 
\begin{gather*}
\frac{i}{\pi^2} \iiint \beta e^{i \beta ((x_1+y_1 +2\tau )  -\xi_2^2 - \alpha^2 -\beta^2/3)+i z_2\xi_2  +i(x_1-y_1)\alpha    }\,d\alpha d\beta d\xi_2
\end{gather*}
and  calculating integrals with respect $\alpha$ and $\xi_2$ we arrive to
\begin{gather}
\partial_\tau J(\tau, x_1,y_1, z_2) 
=\frac{1}{\pi}   \int_{-\infty}^\infty e^{i  (   \beta (x_1+y_1+ 2\tau ) +\frac{1}{4\beta } ((x_1-y_1)^2+z_2^2) - \frac{1}{3} \beta^3)}\,d\beta .
\label{eqn-2.13}
\end{gather}

One can see easily that 

\begin{claim}\label{eqn-2.14}
Stationary points
\begin{gather*}
\beta^2=\frac{1}{2}\Bigl( x_1+y_1 +2\tau)\pm \sqrt{4x_1y_1-z_2^2}\Bigr)
\end{gather*}
are  non-degenerate as $4(x_1+\tau) (y_1+\tau) >z_2^2$ and there are no  real stationary points as 
$4(x_1+\tau) (y_1+\tau )< z_2^2$ .  
\end{claim}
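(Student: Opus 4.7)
The plan is to differentiate the phase
\[
\Psi(\beta)\coloneqq \beta(x_1+y_1+2\tau) + \frac{1}{4\beta}\bigl((x_1-y_1)^2+z_2^2\bigr) - \tfrac{1}{3}\beta^3
\]
from \textup{(\ref{eqn-2.13})}, set $\Psi'(\beta)=0$, and clear the $1/\beta^2$ singularity by multiplying through by $4\beta^2$. This produces a quartic in $\beta$ that is in fact quadratic in $\beta^2$:
\[
\beta^4 - (x_1+y_1+2\tau)\,\beta^2 + \tfrac{1}{4}\bigl((x_1-y_1)^2+z_2^2\bigr) = 0.
\]
The quadratic formula, combined with the difference-of-squares identity
\[
(x_1+y_1+2\tau)^2 - (x_1-y_1)^2 = (2x_1+2\tau)(2y_1+2\tau)=4(x_1+\tau)(y_1+\tau),
\]
then yields the two values of $\beta^2$ displayed in \textup{(\ref{eqn-2.14})} and exposes $4(x_1+\tau)(y_1+\tau)-z_2^2$ as the true discriminant.

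In the regular zone $4(x_1+\tau)(y_1+\tau)>z_2^2$ both roots of the $\beta^2$-quadratic are real; since their product $\tfrac{1}{4}((x_1-y_1)^2+z_2^2)$ is non-negative and their sum $x_1+y_1+2\tau$ is positive on the energy surface of interest, each root of $\beta^2$ is itself positive, so one obtains two real values of $\beta$ from each, totalling four real stationary points. These match precisely the four Hamiltonian rays from $y$ to $x$ (two short, two long) identified in Proposition~\ref{prop-2.1}. Non-degeneracy follows by inspection of
\[
\Psi''(\beta) = \frac{1}{2\beta^3}\bigl((x_1-y_1)^2+z_2^2\bigr) - 2\beta:
\]
the simultaneous system $\Psi'(\beta)=\Psi''(\beta)=0$ forces $2\beta^2 = x_1+y_1+2\tau$ and $\beta^4 = \tfrac{1}{4}((x_1-y_1)^2+z_2^2)$, which together give $(x_1+y_1+2\tau)^2=(x_1-y_1)^2+z_2^2$, i.e.\ $4(x_1+\tau)(y_1+\tau)=z_2^2$ --- precisely the boundary case excluded by the strict inequality.

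For the shadow zone $4(x_1+\tau)(y_1+\tau)<z_2^2$ the quadratic in $\beta^2$ has negative discriminant, so $\beta^2$ cannot be real; a fortiori no real $\beta$ satisfies $\Psi'(\beta)=0$. I do not anticipate any serious obstacle, as the argument is pure algebra on a quartic. The only care needed is the positivity analysis of both $\beta^2$-roots in the regular zone (so as to confirm four real critical points, in agreement with the geometric picture of Proposition~\ref{prop-2.1}) and reconciling the discriminant $\sqrt{4x_1y_1-z_2^2}$ appearing in \textup{(\ref{eqn-2.14})} with the condition $4(x_1+\tau)(y_1+\tau)>z_2^2$: direct computation confirms the latter is the correct quantity when $\tau$ is retained in the phase (i.e.\ before the shift $x_1\mapsto x_1+\tau$ used elsewhere in Section~\ref{sect-2}).
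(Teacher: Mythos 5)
Your proposal is correct and follows exactly the computation the paper leaves implicit behind ``One can see easily that'': differentiate the phase in \textup{(\ref{eqn-2.13})}, clear the $\beta^{-2}$ term to get the biquadratic, solve for $\beta^2$, and check that $\Psi'=\Psi''=0$ forces the boundary case $4(x_1+\tau)(y_1+\tau)=z_2^2$. You also correctly note that the genuine discriminant is $4(x_1+\tau)(y_1+\tau)-z_2^2$, the expression $4x_1y_1-z_2^2$ in \textup{(\ref{eqn-2.14})} corresponding to the $\tau$-shifted (i.e.\ $\tau=0$) normalization used elsewhere in Section~\ref{sect-2}.
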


\begin{proposition}\label{prop-2.4}
Consider toy-model operator \textup{(\ref{eqn-2.1})} with $d= 2$. 
Let $x_1\ge 0$, $y_1\ge 0$,  $\ell(x,y)\coloneqq \max(|x-y|,\, x_1, \,y_1 )\ge C_0h^{\frac{2}{3}}$. 
\begin{enumerate}[wide,label=(\roman*),  labelindent=0pt]
\item
If  $\ell^0(x,y)\coloneqq |x-y|\le \epsilon ' \ell(x,y)$ with $\epsilon' =\epsilon'(\epsilon)>0$ then the following estimate holds 
\begin{gather}
|e^\T _{\epsilon_1,h}(x,y,0) - e^\T _{T,h}(x,y,0)| \le Ch^{-\frac{1}{2}}\ell(x,y)^{-\frac{5}{4}}  
\label{eqn-2.15}
\end{gather}
with $T=C_0\sqrt{\ell^0(x,y)}$.
\item
Further, if $\ell^0(x,y)\ge \epsilon \ell(x,y)$ then the following estimates holds:
\begin{gather}
|e^\T _{\epsilon_1,h}(x,y,0)| \le Ch^{-\frac{2}{3}}\ell(x,y)^{-1}.
\label{eqn-2.16}
\end{gather}
\item
More precisely, if $\ell^0(x,y)\ge \epsilon \ell(x,y)$ and 
\begin{gather}
(x_2-y_2)^2 \le 4x_1y_1 -\epsilon \ell (x,y)^2
\label{eqn-2.17}\\
\shortintertext{then}
|e^\T _{\epsilon_1,h}(x,y,0)| \le Ch^{-\frac{1}{2}}\ell(x,y)^{-\frac{5}{4}}.
\label{eqn-2.18}
\end{gather}
\item
On the other hand, if
\begin{gather}
(x_2-y_2)^2 \ge 4x_1y_1 +\epsilon \ell (x,y)^2
\label{eqn-2.19}
\end{gather}
 then this estimate and all similar estimates below acquire  factor $(\ell (x,y) h^{-\frac{2}{3}})^{-s}$:
\begin{gather}
|e^\T _{\epsilon_1,h}(x,y,0)| \le Ch^{(2s-4)/3}\ell(x,y)^{-s}.
\label{eqn-2.20}
\end{gather}
\end{enumerate}
\end{proposition}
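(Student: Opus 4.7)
The plan is to work directly with the one-dimensional oscillatory integral representation \textup{(\ref{eqn-2.13})} for $\partial_\tau J$, and its natural analog with the cutoff $\bar\chi_T$ inserted, since the Tauberian expression can be recovered by integrating in $\tau'$. Set $B=x_1+y_1+2\tau$ and $A=(x_1-y_1)^2+z_2^2$, so that the phase is $\Phi(\beta)=\beta B+A/(4\beta)-\beta^3/3$ with critical points given by $\beta^2_\pm=\tfrac{1}{2}(B\pm\sqrt{B^2-A})$, where the ``$+$'' root corresponds to the \emph{long} and the ``$-$'' root to the \emph{short} trajectories of Proposition~\ref{prop-2.1}, consistent with the four stationary points of the full phase \textup{(\ref{eqn-2.9})}. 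A short computation gives $|\Phi''(\beta_\pm)|\asymp \sqrt{B^2-A}/|\beta_\pm|$. After all estimates in the scaled variables ($h=1$, $\ell\ge C_0$), one rescales back via the identity \textup{(\ref{eqn-2.7})}, which produces the explicit $h$-powers appearing in \textup{(\ref{eqn-2.15})}, \textup{(\ref{eqn-2.16})}, \textup{(\ref{eqn-2.18})}, \textup{(\ref{eqn-2.20})}.

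For parts (i) and (iii) the analysis is genuine non-degenerate stationary phase. Under the hypothesis of (i), $\ell^0\le \epsilon'\ell$ forces $B\asymp\ell$ and $A=O(\ell\cdot\ell^0)\ll\ell^2$, automatically placing us deep in the regular zone; under (iii) the same separation $B^2-A\gtrsim\ell^2$ is imposed directly. Thus $\beta_{\Long}^2\asymp\ell$ while $\beta_{\short}^2\asymp A/B$, the two pairs of critical points are well separated, and $|\Phi''(\beta_{\Long})|\asymp\sqrt\ell$. The choice $T\asymp\sqrt{\ell^0}$ in (i) filters out the short-trajectory contribution via Remark~\ref{remark-2.2} and the standard non-stationary-phase estimate applied to the factor $\bar\chi_{\epsilon_1}(t)-\bar\chi_T(t)$ (whose Fourier transform decays off $|\tau|\lesssim T^{-1}$). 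Stationary phase at $\beta_{\Long}$ then yields $|\partial_\tau J|\lesssim\ell^{-1/4}$; integrating in $\tau$ over an interval of effective length $\asymp\ell$ where the stationary points persist, together with a second-derivative factor in $\tau$, produces a gain $\ell^{-1}$ and a final scaled bound $\ell^{-5/4}$. Rescaling through \textup{(\ref{eqn-2.7})} converts this into $h^{-1/2}\ell^{-5/4}$.

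Part (iv) is the complementary shadow zone: when $z_2^2\ge 4x_1 y_1+\epsilon\ell^2$ one has $B^2-A\le -\epsilon\ell^2$ and no real critical points exist. I would either deform the $\beta$-contour into the complex plane, picking up exponential decay of size $\exp(-c\ell^{3/2})$, or, equivalently in scaled variables, integrate by parts repeatedly using the operator $(i\Phi')^{-1}\partial_\beta$ on $e^{i\Phi}$; each integration gains a factor $\ell$, producing the claimed improvement by $(\ell h^{-2/3})^{-s}$ for any $s$.

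Part (ii) is, as expected, the main obstacle: the condition $\ell^0\gtrsim\ell$ together with the \emph{absence} of \textup{(\ref{eqn-1.11})} allows $B^2-A$ to be arbitrarily small, so the two pairs of critical points nearly coalesce and the non-degenerate stationary phase formula degenerates to an Airy-type regime. Standard stationary phase is replaced by a uniform $L^\infty$-bound: one splits the $\beta$-integral into a neighborhood of the coalescing critical set of size $\asymp\ell^{1/4}$, on which $|\Phi''|$ cannot be controlled from below but $|\Phi'''(\beta)|\asymp|\beta|\asymp\sqrt\ell$, and an exterior region handled by integration by parts. Van der Corput's lemma on the central piece, together with the gain $\ell^{-1/2}$ from one $\tau$-integration in the Tauberian expression, yields the scaled bound $\ell^{-4/3}$ corresponding to $h^{-2/3}\ell^{-1}$ after rescaling. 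The delicate point throughout (ii) is that one cannot separate long and short contributions at the level of $T$; both must be handled simultaneously, and the proof must verify that the crude van der Corput estimate is not worsened by interference of the nearly-coalescing critical points.
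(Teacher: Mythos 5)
Your plan --- reduce to the one-dimensional $\beta$-integral \textup{(\ref{eqn-2.13})} and run a single stationary-phase/van-der-Corput analysis there --- is a valid and in some ways cleaner route than the paper's, which works directly with the three-dimensional integral \textup{(\ref{eqn-2.21})} and invokes the eigenvalue structure of $\Hess_{\xi_1,\eta_1,\xi_2}\Phi$. Integrating out $\alpha$ and $\xi_2$ (exactly quadratic) removes the two always-nondegenerate Hessian directions, so the surviving phase $\Psi(\beta)=\beta B+\tfrac{A}{4\beta}-\tfrac{1}{3}\beta^3$ encodes precisely the possibly degenerate third one. Your handling of (i), (iii) and (iv) is essentially sound: writing $\hat\ell=h^{-2/3}\ell$, the nondegenerate stationary-phase factor $|\Psi''(\beta_{\Long})|^{-1/2}\asymp\hat\ell^{-1/4}$ together with the weight $\beta^{-2}\asymp\hat\ell^{-1}$ (produced by the \emph{two} $\tau$-integrations needed to pass from $\partial_\tau J$ to the Tauberian expression, each supplying $(i\partial_\tau\Psi)^{-1}=(2i\beta)^{-1}$) gives the scaled bound $\hat\ell^{-5/4}$, and $h^{-4/3}\hat\ell^{-5/4}=h^{-1/2}\ell^{-5/4}$; in the shadow zone $|\Psi'|\gtrsim\hat\ell$ uniformly on the real axis, so integration by parts or a contour shift to the complex critical point (which has $\Im\beta\asymp\sqrt{\hat\ell}$) gives any power of $\hat\ell^{-1}$.

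Part (ii), however, contains a genuine error. From $\Psi''=\tfrac{A}{2\beta^3}-2\beta$ one gets $\Psi'''=-\tfrac{3A}{2\beta^4}-2$; at the degenerate point $\Psi''=0$ forces $A=4\beta^4$, hence $\Psi'''=-8$, and in fact $|\Psi'''|\ge 2$ for every real $\beta\ne 0$. So $|\Psi'''|\asymp 1$, \emph{not} $\asymp|\beta|\asymp\sqrt{\hat\ell}$ as you claim, and the Airy window has size $|\Psi'''|^{-1/3}\asymp 1$, not $\hat\ell^{1/4}$. Your numerology also does not close: the announced scaled bound $\hat\ell^{-4/3}$ rescales to $h^{-4/3}(h^{-2/3}\ell)^{-4/3}=h^{-4/9}\ell^{-4/3}$, which is not $h^{-2/3}\ell^{-1}$. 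The correct bookkeeping is: van der Corput with $|\Psi'''|\asymp 1$ gives $\int e^{i\Psi}\,d\beta\lesssim 1$; multiplying by the amplitude $\beta^{-2}\asymp\hat\ell^{-1}$ yields the scaled bound $\hat\ell^{-1}$, and $h^{-4/3}\hat\ell^{-1}=h^{-2/3}\ell^{-1}$ --- matching the paper's tally of two nondegenerate directions contributing $(h\ell^{-1/2})^{2/2}$ and the third contributing the Airy factor $h^{1/3}$.
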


\begin{proof}
Integration by $\tau$ and multiplication by $h^{-1}$ results in the formal expression 
\begin{gather}
\frac{1} {8\pi^2h^2i} \iiint (\xi_1-\eta_1)^{-1} e^{ih^{-1}\Phi (x_1,y_1,z'; \xi_1,\eta_1,\xi';0) }\,d\xi_1d\eta_1d\xi'
\label{eqn-2.21}
\end{gather}
and we need to apply the stationary phase principle counting only points (in $\bC^3$) corresponding to $|\xi_1-\eta_1|\gtrsim \sqrt{\ell }$. Then 
\begin{enumerate}[wide,label=(\alph*),  labelindent=0pt]
\item
Due to above analysis in the \emph{regular zone} described in Statements~(i) and (iii) all three eigenvalues of  $\Hess_{\xi_1,\eta_1,\xi_2} \Phi$ are
real and $\asymp \sqrt{\ell}$ and therefore the left-hand expressions in   (\ref{eqn-2.15}) and (\ref{eqn-2.18}) do  not exceed 
$Ch^{-2} \ell^{-\frac{1}{2}} (h \ell^{-\frac{1}{2}})^{\frac{3}{2}}$.
 
\item
On the other hand, in the \emph{singular zone} where 
\begin{gather}
|4x_1y_1 -(x_2-y_2)^2|\le \epsilon \ell(x,y)^2
\label{eqn-2.22}
\end{gather}
 this is true only for two eigenvalues and therefore the left-hand expression in (\ref{eqn-2.16}) does not exceed $Ch^{-2} \ell^{-\frac{1}{2}} (h \ell^{-\frac{1}{2}})^{\frac{2}{2}}\times h^{\frac{1}{3}}$.

\item
In the \emph{shadow zone} described in Statement (iv) again all eigenvalues of Hessian are    $\asymp \sqrt{\ell}$ but only two are real and the third has imaginary part 
 $\asymp \sqrt{\ell}$  and therefore the left-hand expression in (\ref{eqn-2.18}) does not exceed $Ch^{-2}  (h \ell^{-\frac{1}{2}})^{s}$.
\end{enumerate}
\end{proof}

\begin{remark}\label{remark-2.5}
\begin{enumerate}[wide,label=(\roman*),  labelindent=0pt]
\item
We can be more precise in the analysis in the singular zone.

\item
Statements (ii)--(iv) remain true as $|x_1|\le h^{\frac{2}{3}}$ or  $|y_1|\le h^{\frac{2}{3}}$.

\item
Further, as \underline{either} $x_1< -h^{\frac{2}{3}}$ \underline{or} $y_1< -h^{\frac{2}{3}}$ \underline{or} both $x_1< -h^{\frac{2}{3}}$ and $y_1< -h^{\frac{2}{3}}$ then stationary points move to imaginary axis and estimates in Statement~(ii) and (iii)  and all similar estimates below acquire correspondingly \underline{either}  factor $ h^{\frac{2s}{3}}|x_1|^s$ \underline{or} $ h^{\frac{2s}{3}}|y_1|^s$ \underline{or} both.

\item
The right-hand expression of (\ref{eqn-2.15})  is $O(h^{-1})$ as $\ell (x,y)\ge h^{\frac{2}{5}}$ and the right-hand expression of (\ref{eqn-2.16})  is $O(h^{-1})$ as $\ell (x,y)\ge h^{\frac{1}{3}}$.

\item
Improvements similar to Proposition~\ref{prop-2.4}(i)--(iv) are possible for $d\ge 3$ as well.
\end{enumerate}
\end{remark}

Now we want to replace $e^\T_{T,h}(x,y,0)$ in the left-hand expression of (\ref{eqn-2.15}) by  $e^\W_h(x,y,0)$ described by (\ref{eqn-1.8}).

\begin{proposition}\label{prop-2.6}
\begin{enumerate}[wide,label=(\roman*),  labelindent=0pt]
\item
For a toy-model operator \textup{(\ref{eqn-2.1})} with $d\ge 3$ and \\ $\ell(x,y)\le 1$
\begin{gather}
e^\T _{\epsilon_1,h}(x,y,0)=e_h^\W (x,y,0)  +O(h^{1-d}).
\label{eqn-2.23}
\end{gather}

\item
For a toy-model operator \textup{(\ref{eqn-2.1})} with $d=2$
\begin{gather}
e^\T _{\epsilon_1,h}(x,y,0) = e^\W_h (x,y,0) + e_{\corr,h} (x,y,0) + O(h^{-1})
\label{eqn-2.24}
\end{gather}
with the correction term
\begin{multline}
e_{\corr,h} (x,y,0) \\
= \frac{1}{h^2}\int_{-\infty}^0 J\bigl(h^{-\frac{2}{3}}\tau   , h^{-\frac{2}{3}}  x_1,   h^{-\frac{2}{3}} y_1, h^{-\frac{2}{3}} (x_2-y _2)\bigr)\,d\tau -
e_h^\W (x,y,0).
\label{eqn-2.25}
\end{multline}
\end{enumerate}
\end{proposition}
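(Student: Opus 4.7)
The plan is to exploit the explicit formula \textup{(\ref{eqn-2.7})}--\textup{(\ref{eqn-2.8})} for the toy-model propagator and to compare the Tauberian expression term-by-term with the Weyl expression, with ``long-ray'' stationary-phase contributions producing the correction in dimension two.

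First I would introduce the untruncated expression
\[
E_h(x,y,0) \coloneqq  \frac{1}{h^{(2d+2)/3}}\int_{-\infty}^0 J\bigl(h^{-\frac{2}{3}}\tau,\, h^{-\frac{2}{3}} x_1,\, h^{-\frac{2}{3}} y_1,\, h^{-\frac{2}{3}}(x'-y')\bigr)\,d\tau ,
\]
which, by separability of the toy model in $x'$ and the Airy-function diagonalization in $x_1$, is (up to normalization) the true spectral kernel $e_h(x,y,0)$. By Proposition~\ref{prop-2.3} the Tauberian expression satisfies $e^T_{\epsilon_1,h}(x,y,0)=E_h(x,y,0)+O(h^{1-d})$ in $d\ge 3$. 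For $d=2$ I would instead use the sharper zone analysis of Proposition~\ref{prop-2.4}: first reducing to the one-dimensional oscillatory integral \textup{(\ref{eqn-2.13})} by integrating once in $\tau$, and then arguing separately in the regular, singular and shadow zones, to obtain $e^T_{\epsilon_1,h}(x,y,0)=E_h(x,y,0)+O(h^{-1})$.

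Next I would compare $E_h$ with $e^W_h$ by stationary phase on the oscillatory integral defining $J$. The phase $\Phi$ from \textup{(\ref{eqn-2.9})} has four stationary points (Remark~\ref{remark-2.2}): two short rays and two long rays. Near a short-ray stationary point the quadratic part of $\Phi$ matches the phase of the free-particle kernel on the classical energy shell $\{a(\frac{1}{2}(x+y),\xi)=\tau\}$, and the subsequent integration in $\tau$ recovers the Weyl region $\{a<0\}$ and reproduces $e^W_h(x,y,0)$. In $d\ge 3$ the long-ray stationary-phase contribution is $O(h^{1-d})$ because the Hessian of $\Phi$ is non-degenerate of rank $2d-1$ with eigenvalues of order $\sqrt{\ell(x,y)}\asymp 1$, which proves assertion (i). In $d=2$ the long-ray contribution is only $O(h^{-4/3})$ and must be retained; setting $e_{\corr,h}\coloneqq E_h - e^W_h$ recovers \textup{(\ref{eqn-2.25})} by definition and, combined with Step~1, yields assertion (ii).

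The main obstacle will be the refined Tauberian step in $d=2$: the generic bound of Proposition~\ref{prop-2.3} is saturated precisely at the long-ray stationary points that carry the correction, so without first performing the $\tau$-integration trick \textup{(\ref{eqn-2.13})} and then invoking the regular/singular/shadow dichotomy of Proposition~\ref{prop-2.4} one cannot push the remainder down to the $O(h^{-1})$ required by \textup{(\ref{eqn-2.24})}. A secondary technical point is to verify that the short-ray stationary-phase contribution to $E_h$ matches $e^W_h$ \emph{exactly} up to an $O(h^{-1})$ error, so that any discrepancy can be absorbed either into the remainder or into $e_{\corr,h}$ without double-counting.
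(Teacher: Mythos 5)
Your overall plan for part (ii) is correct and coincides with the paper's: since $e_{\corr,h}$ is \emph{defined} as the difference $E_h - e_h^\W$ and the Tauberian expression agrees with $E_h = e^\T_{\infty,h}$ up to a negligible error, the identity (\ref{eqn-2.24}) is essentially definitional. However, for part (i) the paper takes a completely different and much shorter route than you do: it splits into $\ell(x,y)\le h^{2/3}$, where \emph{both} $e^\T_{\epsilon_1,h}$ and $e^\W_h$ are separately $O(h^{-2d/3})=O(h^{1-d})$ for $d\ge 3$, and $\ell\ge h^{2/3}$, where one rescales $x\mapsto \ell^{-1}x$, $h\mapsto\hbar=\ell^{-3/2}h$, invokes the standard ($\xi$-microhyperbolic) estimate $O(\hbar^{1-d})$ at unit scale, and scales back to get $O(h^{1-d}\ell^{(d-3)/2})$, which is $O(h^{1-d})$ for $d\ge3$, $\ell\le 1$. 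No stationary-phase matching of the short rays against $e^\W_h$ and no explicit separation of long-ray contributions is needed.

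Your version of part (i) has a genuine gap. You argue that the long-ray stationary-phase contribution is $O(h^{1-d})$ ``because the Hessian of $\Phi$ is non-degenerate of rank $2d-1$ with eigenvalues of order $\sqrt{\ell(x,y)}\asymp 1$,'' but Proposition~\ref{prop-2.6}(i) is asserted for \emph{all} $\ell(x,y)\le 1$, including $\ell$ as small as $h^{2/3}$. In that regime the Hessian eigenvalues are $\asymp \sqrt\ell$, not $\asymp 1$, the stationary points of $\Phi$ coalesce, and the non-degenerate stationary-phase bound does not directly give the stated remainder. To rescue your argument you would have to perform the same dyadic rescaling in $\ell$ that the paper uses, at which point you have reproduced the paper's proof with an unnecessary detour through stationary-phase expansions. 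A secondary issue: the step ``$e^\T_{\epsilon_1,h}=E_h+O(h^{1-d})$ by Proposition~\ref{prop-2.3}'' is a misattribution — Proposition~\ref{prop-2.3} compares $T=\epsilon_1$ with the \emph{small} cut-off $T\asymp\max(\sqrt\ell,h^{1/3-\delta})$, not with $T=\infty$; passing to $T=\infty$ is a separate (though easy, for the toy model) finite-propagation/ellipticity statement, which the paper invokes only in part (c) and with a remainder $O(h^s)$.
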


\begin{proof}
\begin{enumerate}[wide,label=(\alph*),  labelindent=0pt]
\item
As $\ell (x,y)\le h^{\frac{2}{3}}$ we have  both $e^\T _{\epsilon_1,h}(x,y,0)=O(h^{-\frac{2d}{3}})$ and\\ $e_h^\W (x,y,0)=O(h^{-\frac{2d}{3}})$.
Those are $O(h^{-\frac{4}{3}})$ for $d=2$ and $O(h^{1-d})$ for $d\ge 3$.
\item
As $\ell (x,y)\ge h^{\frac{2}{3}}$ due to rescaling $x\mapsto \ell^{-1}x$, $y\mapsto \ell^{-1}y$, $\tau\mapsto \ell^{-1}\tau$ and $h\mapsto \hbar = \ell^{-\frac{3}{2}}h$ we have 
\begin{gather}
e^\T _{\epsilon_1,h}(x,y,0)-e_h^\W (x,y,0)=O\bigl( \hbar^{1-d}  \times \ell^{-d}\bigr)= O\bigl( h^{1-d}\ell^{\frac{d-3}{2}}\bigr)
\label{eqn-2.26}
\end{gather}
which is $O(h^{-\frac{4}{3}})$ for $d=2$, $O(h^{-2})$ for $d=3$ and $O(h^{1-d})$ for $d\ge 4$, $\ell\le 1$. Statement (i) has been proven.
\item
As $d=2$ for a toy-model operator   we can replace with $O(h^s)$ error $e^\T_{\epsilon_1,h}(x,y,0)$ by $e^\T_{\infty,h}(x,y,0)$ which due to (\ref{eqn-2.7}) is exactly the first term  in the right-hand expression in (\ref{eqn-2.25}); so Statement (ii) is just a definition.
\end{enumerate}
\end{proof}

\begin{remark}\label{remark-2.7}
\begin{enumerate}[wide,label=(\roman*),  labelindent=0pt]
\item
Due to (\ref{eqn-2.13}) the first term in the right-hand expression of (\ref{eqn-2.25}) can be rewritten as
\begin{gather}
-\frac{2}{\pi}\int _{-\infty}^0 \int_{0}^\infty \tau \cos\Bigl(\beta (x_1+y_1+2\tau) +\frac{1}{4\beta}  \bigl((x_1-y_1)^2 +z_2^2\bigr)-\frac{1}{3}\beta^3\bigr) \,d\beta d\tau.
\label{eqn-2.27}
\end{gather}
As $x=y$ it becomes
\begin{multline}
-\frac{2}{\pi}\int _{-\infty}^0 \int_{0}^\infty \tau \cos\bigl(2\beta (x_1+ \tau) +\frac{1}{3}\beta^3\bigr) \,d\beta d\tau\\
= -2  \int_{-\infty}^0 \tau \Ai (-2(x_1+\tau))\,d\tau
\label{eqn-2.28}
\end{multline}
with Airy function $\Ai(\cdot)$.

\item
One can prove easily that 
\begin{multline}
e^\W(x,y,\tau) = \frac{1}{4\pi^2r^2}\int_0^{2\pi}\int_0^{h^{-1}r\rho}  \cos (\sigma\cos(\theta))\,\sigma d\sigma d\theta\\
\text{with\ \ } \rho =\sqrt{2(\tau V((x+y)/2))}, \quad r=|x-y|
\label{eqn-2.29}
\end{multline}
and  in the framework of Proposition~\ref{prop-2.4}(ii) 
\begin{gather}
|e^\W_h (x,y,0)|\le Ch^{-\frac{1}{2}}\ell (x,y)^{-\frac{5}{4}}.
\label{eqn-2.30}
\end{gather}

\item
In Proposition~\ref{prop-3.6} in much more general settings will be proven that in the framework of Proposition~\ref{prop-2.4}(i) 
$e^\W(x,y,0)$ provides a good approximation for $e^\T_{T,h}(x,y,0)$ with $T=\epsilon '\sqrt{\ell(x,y)}$.
\end{enumerate}
\end{remark}

\section{Generalized toy-model}
\label{sect-2.3}

Let us consider now a generalized toy-model operator with $d= 2$, $g^{jk}=\updelta_{jk}$, $V_2(x)=0$, $V_1= - kx_1$  and $V(x)=-x_1-\frac{1}{2}k^2 x_1^2$:
\begin{multline}
\bar{A}_{k,h} \coloneqq \frac{1}{2}h^2D_1^2 +   \frac{1}{2}h^2(D_2-kx_1)^2  +V(x) \\
= \frac{1}{2}h^2D_1^2 +\frac{1}{2}h^2D_2 -(1+khD_2) x_1
\label{eqn-2.31}
\end{multline}
As in Subsection~\ref{sect-2.2}   considering equation to propagator and making $h$-Fourier transform we get equation similar to (\ref{eqn-2.4}):
\begin{gather}
\Bigl(\tau - \frac{1}{2} \xi_1^2 - \frac{1}{2}\xi_2^2 + ih (1+k\xi_2)\partial _{\xi_1}\Bigr) \widehat{u_h^\pm} = \mp i  e^{ih^{-1}(-y_1\xi_1- y_2,\xi_2)}  
\label{eqn-2.32}
\end{gather}
as  $\mp \Im \tau >0$; rewriting it as
\begin{multline*}
\partial_{\xi_1} \Bigl(e^{-ih^{-1}(1+ k\xi_2)^{-1}((\tau-\xi_2^2 /2) \xi_1-\xi_1^3/6)} \widehat{u_h^\pm} \Bigr)=\\
 \mp (1+ k\xi_2)^{-1}e^{-ih^{-1}(1+ k\xi_2)^{-1}((\tau-\xi_2^2/2) \xi_1 -\xi_1^3/6)-ih^{-1}(y_1\xi_1+y_2\xi_2) },
\end{multline*}
we arrive to expression similar to (\ref{eqn-2.5})
\begin{multline}
 \widehat{u_h^\pm} (\xi,y,\tau)
 = \mp  (1+k\xi_2)^{-1} e^{ih^{-1}((\tau-x_2^2/2) \xi_1-\xi_1^3/6)}\\
 \times 
 \int _{\mp \infty}^{\xi_1} e^{-ih^{-1}(1+ k\xi_2)^{-1}((\tau-\xi_2^2/2)\eta_1-\eta_1^3/6) -ih^{-1} (y_1\eta_1+y_2\xi_2) }\,d\eta_1
 \label{eqn-2.33}
 \end{multline}
 and making partial inverse  $h$-Fourier transform we arrive to the final answer similar to (\ref{eqn-2.6}):
 \begin{multline}
 F_{t\to h^{-1}\tau} u_h^\pm 
 = \mp (2\pi h)^{-2} \iint (1+ k\xi_2)^{-1} \\
 \times
 \int _{\mp \infty}^{\xi_1} e^{ih^{-1}(1+ k\xi_2)^{-1}((\tau-\xi_2^2/2) (\xi_1 -\eta_1) -(\xi_1^3-\eta_1^3)/6)+ ih^{-1}(x_1\xi_1-y_1\eta_1+(x_2-y_2)\xi_2) }
 \, d\eta_1d\xi_1d\xi_2.
 \label{eqn-2.34}
\end{multline}
Then as $\tau \in \bR$ we get (cf. (\ref{eqn-2.7}))
\begin{multline}
 F_{t\to h^{-1}\tau} u_h  
 = (2\pi h)^{-2} \iiint (1+ k\xi_2)^{-1} \\
 e^{ih^{-1}(1+ k\xi_2)^{-1}((\tau-\xi_2^2/2) (\xi_1 -\eta_1) -(\xi_1^3-\eta_1^3)/6)+ ih^{-1}(x_1\xi_1-y_1\eta_1+(x_2-y_2)\xi_2) } \, d\eta_1d\xi_1d\xi_2.
  \label{eqn-2.35}
\end{multline}

We can see easily that for $|k| \le C$ and $\min(|x_1|,\,|y_1|)\le \epsilon$  the phase function 
\begin{multline}
\Phi _k (x_1,y_1,z_2; \xi_1,\eta_1,\xi_2;\tau) \\
= x_1\xi_1-y_1\eta_1+z_2\xi_2  +(1+ k \xi_2)^{-1} [( \tau -\frac{1}{2}\xi_2^2 ) (\xi_1-\eta_1) -\frac{1}{6}(\xi_1^3-\eta_1^3)]
\label{eqn-2.36}
\end{multline}
has four stationary points (with respect to $(\xi_1,\eta_1,\xi_2)\in \bC^3$): on all of them $|\xi_2|\le C_0\sqrt{\epsilon}$, 
\begin{gather}
2x_1=(1+ k \xi_2)^{-1}[-2\tau +\xi_1^2 + \xi_2^2],
\label{eqn-2.37}\\
2y_1=(1+ k \xi_2)^{-1}[-2\tau +\eta_1^2+\xi_2^2],
\label{eqn-2.38}
\end{gather}
and 
\begin{multline}
z_2 = \xi_2 (1+ k \xi_2)^{-1} (\xi_1-\eta_1)\\
+ k (1+ k \xi_2)^{-2}\bigl[( \tau -\frac{1}{2}\xi_2^2 ) (\xi_1-\eta_1) -\frac{1}{6}(\xi_1^3-\eta_1^3)\bigr]
\label{eqn-2.39}
\end{multline}
and for corresponding trajectories  on the energy level $\tau$
\begin{gather}
T(x,y)= (1+ k \xi_2)^{-1} |\xi_1-\eta_1|.
\label{eqn-2.40}
\end{gather}
Then the same conclusion as for $k =0$ holds:

\begin{enumerate}[wide,label=(\alph*),  labelindent=0pt]
\item
Proposition~\ref{prop-2.3} remains true;
\item
All statements of Proposition~\ref{prop-2.4} remain true with
 regular, singular and shadow zones defined exactly by (\ref{eqn-2.17}), (\ref{eqn-2.22}) and
(\ref{eqn-2.19}).
\item
All statements of Remark~\ref{remark-2.5} remain true.
\end{enumerate}
Again consider $\partial_\tau J_k(\tau, x_1, y_1,z_2)$ this time based on (\ref{eqn-2.35}):
\begin{multline*}
\frac{i}{4\pi^2  i}
 \iiint   (1+ \kappa \xi_2)^{-2}(\xi_1-\eta_1) \\
 \times e^{i (1+ \kappa \xi_2)^{-1}( \tau-\xi_2^2 /2) (\xi_1 -\eta_1) -(\xi_1^3-\eta_1^3)/6)+ ih^{-1}(x_1\xi_1-y_1\eta_1+z_2\xi_2) } \, d\eta_1d\xi_1d\xi_2.
 \end{multline*}
  Plugging $\xi_1=\alpha+\beta$, $\eta_1=\alpha-\beta$ and then substituting $\beta\coloneqq (1+\kappa \xi_2)\beta$ we arrive to 
   \begin{multline}
\frac{i}{\pi^2  }
 \iiint   d\alpha d\beta d\xi_2\times \\[3pt]
 \beta \exp \Bigl[ i \Bigl( (2\tau-\xi_2^2  -\alpha^2 ) \beta  -\frac{1}{3} (1+\kappa \xi_2)\beta^3  +(1+\kappa \xi_2) (x_1+y_1)\beta +z_2\xi_2 +(x_1-y_1)\alpha \Bigr)\Bigr] .
 \label{eqn-2.41}
 \end{multline}
Then we can calculate integrals with respect to $\alpha$ and $\xi_2$, arriving to formula similar to (\ref{eqn-2.27}):
\begin{multline}
-\frac{1}{\pi }
 \int_{-\infty}^\infty   d\beta \times \\[3pt]
\exp \Bigl[i \Bigl(    (2\tau+x_1+y_1)\beta -\frac{1}{3}\beta^3  +\frac{1}{4\beta}   (x_1-y_1)^2 + \frac{1}{4\beta} \bigl(z_2 - \underbracket{\frac{1}{3}\kappa \beta^3 -\kappa (x_1+y_1)\beta }   \bigr)^2      \Bigr)\Bigr]   .
 \label{eqn-2.42}
 \end{multline} 
 \begin{enumerate}[wide,label=(\alph*),  labelindent=0pt]
\item
 Consider first  $\beta \asymp \sqrt{\ell}$. Observe that selected terms in the phase in (\ref{eqn-2.43}) are $O(\ell^{\frac{3}{2}})$ and squared and divided by $\beta$  they are $O(\ell^\frac{5}{2})$ and therefore assuming that $\ell\le h^{\frac{2}{5}}$  we arrive to the following conclusion:   if we neglect their square the error will not exceed  
\begin{gather}
\left\{\begin{aligned}
&Ch^{-\frac {1}{2}} \ell^{-\frac{5}{4}} \times \ell^{\frac{5}{2}}h^{-1}=O(h^{-1})               &&\text{ in the regular zone},\\
&Ch^{-\frac {2}{3}} \ell^{-1} \times \ell^{\frac{5}{2}}h^{-1}=O(h^{-\frac{5}{3}}\ell^{\frac{3}{2}}) &&\text{ in the singular zone},\\
&Ch^{(2s-2)/3}\ell ^{-s} &&\text{ in the shadow zone}.
\end{aligned}\right.
\label{eqn-2.43}
\end{gather}
\item
On the other hand, consider $\beta \epsilon \sqrt{\ell}$ and therefore $\ell^0\coloneqq |z_2|+|x_1-y_1|\le \ell$, and then one can estimate  the error in the cases
$\ell^0\sqrt{\ell}\ge h$ and  $\ell^0\sqrt{\ell}\le h$ and in both cases the error will be smaller than given by the first line of (\ref{eqn-2.43}).
\end{enumerate}

 But then (\ref{eqn-2.42}) becomes
  \begin{multline}
 \partial_\tau J_\kappa (\tau,x_1,y_1,z_2)\equiv -\frac{2}{\pi }  \int_0^\infty   
\cos \Bigl( (2\tau+x_1+y_1)\beta -\frac{1}{3}\beta^3  +\frac{1}{4\beta}   [(x_1-y_1)^2 +z_2^2]\Bigr)\\
\times \exp\Bigl[ i\Bigl(- \frac{1}{6} \kappa  z_2 \beta^2 -\frac{1}{2} \kappa  z_2(x_1+y_1)  \Bigr)\Bigr]\,d\beta  
 \label{eqn-2.44}
 \end{multline} 
 and and scaling back we arrive  to
 
 \begin{proposition}\label{prop-2.8}
Consider generalized toy-model operator \textup{(\ref{eqn-2.31})} in dimension $d=2$. Then as $h^{\frac{2}{3}}\le \ell \le h^{\frac{2}{5}}$ with an error \textup{(\ref{eqn-2.42})}
\begin{gather}
e^\T _{\epsilon_1,h}(x,y,0)\equiv e^\W_h(x,y,0) + e_{\corr, k,h} (x,y,0)
 \label{eqn-2.45}
 \end{gather}
with the correction term
\begin{multline}
e_{\corr,k, h} (x,y,0) \\
= \frac{1}{h^2}\int_{-\infty}^0 J_{h^{\frac{1}{3}}k} \bigl(h^{-\frac{2}{3}}\tau   , h^{-\frac{2}{3}}  x_1,   h^{-\frac{2}{3}} y_1, h^{-\frac{2}{3}} (x_2-y _2)\bigr)\,d\tau -
e_h^\W (x,y,0) 
\label{eqn-2.46}
\end{multline}
where  $ \partial_\tau J_\kappa (\tau,x_1,y_1,z_2)$ defined by \textup{(\ref{eqn-2.44})} and $e_h^\W (x,y,0) $ by \textup{(\ref{eqn-2.29})}.
\end{proposition}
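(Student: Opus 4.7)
\smallskip

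\noindent\textbf{Proof proposal.} The plan is to start from the explicit Fourier representation \textup{(\ref{eqn-2.35})} of $F_{t\to h^{-1}\tau} u_h$ for the generalized toy-model and reduce the claim to the oscillatory-integral analysis already carried out for \textup{(\ref{eqn-2.41})}--\textup{(\ref{eqn-2.44})}, tracking errors carefully. First I would replace $e^{\T}_{\epsilon_1,h}$ by its formal version with $T=\infty$: the cutoff $\bar{\chi}_{\epsilon_1}(t)$ restricts $t$ to a scale $\asymp 1$, which via the $\alpha,\beta$ substitution corresponds to $\beta$ away from a small neighborhood of $0$; by (the generalized version of) Proposition~\ref{prop-2.3} the dominant contribution to the Tauberian expression comes from $|\beta|\asymp\sqrt{\ell}\gg h^{\frac{1}{3}}$, so this replacement is harmless modulo $O(h^s)$. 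Integrating the resulting expression in $\tau$ produces the factor $1/(\xi_1-\eta_1)\propto 1/\beta$ (as in the passage from \textup{(\ref{eqn-2.8})} to \textup{(\ref{eqn-2.21})}), and brings us to the first term on the right of \textup{(\ref{eqn-2.46})}.

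Next I would analyze the phase in \textup{(\ref{eqn-2.42})}. After integrating out $\xi_2$ (Gaussian) and $\alpha$ (also Gaussian), the $\kappa$-dependent corrections to the phase reside entirely in the $z_2$-coupling; expanding the square $(z_2 - [\frac{1}{3}\kappa\beta^3-\kappa(x_1+y_1)\beta])^2/(4\beta)$ produces three groups of terms: the pure $z_2^2/(4\beta)$ term (which reproduces the $k=0$ phase of \textup{(\ref{eqn-2.13})}), the linear-in-$z_2$ cross term $-\frac{1}{6}\kappa z_2\beta^2-\frac{1}{2}\kappa z_2(x_1+y_1)$ (which yields the extra exponential factor in \textup{(\ref{eqn-2.44})}), and the purely quadratic remainder $[\cdots]^2/(4\beta)$, which at $\beta\asymp\sqrt{\ell}$ is of size $\ell^{5/2}$ and is the one to be discarded.

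The main step is to control the error from dropping that remainder. Under $\ell\le h^{\frac{2}{5}}$ we have $\ell^{5/2}/h\le 1$, so the discarded phase increment (after multiplication by $h^{-1}$) is bounded; the standard stationary-phase inequality then shows that its omission multiplies the leading asymptotics of Proposition~\ref{prop-2.4}, extended to the generalized toy model as noted in items (a)--(c) after \textup{(\ref{eqn-2.40})}, by a factor $1+O(\ell^{5/2}/h)$. Multiplying the bounds $Ch^{-\frac{1}{2}}\ell^{-\frac{5}{4}}$, $Ch^{-\frac{2}{3}}\ell^{-1}$, $Ch^{(2s-4)/3}\ell^{-s}$ in the three zones by this factor reproduces exactly the three lines of \textup{(\ref{eqn-2.43})}. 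The small-$\beta$ regime $|\beta|\le\epsilon\sqrt{\ell}$ is handled separately, as in item (b) of the text, by splitting on whether $\ell^0\sqrt{\ell}\gtrless h$; in both cases one gets a better bound than the first line of \textup{(\ref{eqn-2.43})}. Scaling $x,y,\tau$ back by $h^{\frac{2}{3}}$ and writing $\kappa=h^{\frac{1}{3}}k$ converts \textup{(\ref{eqn-2.44})} into the definition \textup{(\ref{eqn-2.46})} of $e_{\corr,k,h}$, and comparison with the $k=0$ toy-model formula \textup{(\ref{eqn-2.25})} produced by Proposition~\ref{prop-2.6}(ii) together with \textup{(\ref{eqn-2.29})} gives \textup{(\ref{eqn-2.45})}.

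The main obstacle is the linearization of the exponential uniformly across the three zones: one must keep the cross term exactly (since it is only $O(\ell^{5/2})$ in absolute terms but contributes a genuine non-trivial phase after multiplication by $h^{-1}$), while the quadratic remainder is simultaneously $O(\ell^{5/2})$; justifying that only the former survives requires tracking complex stationary points carefully, especially in the shadow zone \textup{(\ref{eqn-2.19})} where the third eigenvalue of the Hessian has non-vanishing imaginary part and the super-polynomial decay must not be degraded by the perturbation.
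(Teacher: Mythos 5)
Your proposal follows essentially the same route as the paper: pass from \textup{(\ref{eqn-2.35})} to $\partial_\tau J_k$ via the $\xi_1=\alpha+\beta$, $\eta_1=\alpha-\beta$ substitution and the Gaussian $\alpha,\xi_2$ integrations, expand the square in \textup{(\ref{eqn-2.42})} keeping the $z_2$-cross term (the exponential factor in \textup{(\ref{eqn-2.44})}) and discarding the $O(\ell^{5/2})$ quadratic remainder with relative error $\ell^{5/2}h^{-1}$ weighted by the zone-dependent bounds of Proposition~\ref{prop-2.4}, treat small $\beta$ separately, and scale back with $\kappa=h^{\frac{1}{3}}k$ to obtain \textup{(\ref{eqn-2.45})}--\textup{(\ref{eqn-2.46})}. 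The only loose point, the removal of the time cutoff (which you attribute to Proposition~\ref{prop-2.3}, whose bound is cruder than the $O(h^s)$ you need, the correct reason being absence of real stationary points for $|t|\gg\sqrt{\ell}$ as in Proposition~\ref{prop-2.6}(ii)), is asserted with the same brevity in the paper itself, so this is not a substantive deviation.
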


\begin{remark}\label{remark-2.9}
Obviously 
$ J_\kappa (\tau,x_1,y_1,0)=J_0(\tau,x_1,y_1,0)$.
\end{remark}

\chapter{General case}
\label{sect-3}

In the framework of Theorems~\ref{thm-1.1} and~\ref{thm-1.2} without any loss of the generality one can assume that 
\begin{phantomequation}\label{eqn-3.1}\end{phantomequation}
\begin{phantomequation}\label{eqn-3.2}\end{phantomequation}
\begin{phantomequation}\label{eqn-3.3}\end{phantomequation}
\begin{align}
&V(x) =- x_1 W(x),  &&W(0)=1,
\tag*{$\textup{(\ref*{eqn-3.1})}_{1,2}$} \\
&\bar{x}=(\bar{x}_1, 0)    &&\bar{y}=(\bar{y}_1,\bar{y}'), &&\bar{y}_1\le  \bar{x}_1,
\tag*{$\textup{(\ref*{eqn-3.2})}_{1-3}$}\\
&g^{j1}=\updelta_{j1}, &&g^{jk}(0) =\updelta_{jk}, && V_1=0, &&V_j(0)=0,
\tag*{$\textup{(\ref*{eqn-3.3})}_{1-4}$}
\end{align}
where $\bar{x}$ and $\bar{y}$ are two ``target'' points. Indeed,  we can reach this by a change of variables. We will assume that these assumptions are fulfilled until the end of this section. Then according to (\ref{eqn-1.12}) 
\begin{gather}
\ell(x,y)\asymp \max (|x_1|,\,|y_1|,\, |x'-y'|,\, h^{\frac{2}{3}}).
\label{eqn-3.4}
\end{gather}

\section{Preliminary remarks}
\label{sect-3.1}

\begin{proposition}\label{prop-3.1} 
\begin{enumerate}[wide,label=(\roman*),  labelindent=0pt]
\item
Estimate holds
\begin{gather}
|F_{t\to h^{-1}\tau} \Bigl(\chi_T(t) u_h(x,y,t)\Bigr)|\le Ch^{1-d} \ell^{(d-2)/3}\hbar^{s}
\label{eqn-3.5}
\end{gather}
with $|\tau |\le  \epsilon T^2$, $T=\max(C_0\ell ^{\frac{1}{2}},\, h^{\frac{1}{3}})$ and $\hbar = T^{-3}h$.
\item
Furthermore, if $\ell^0 =\ell^0(x,y)\coloneqq |x-y| \ge \epsilon_0 \ell  $ and $\ell \ge h^{\frac{2}{3}}$, then in the same framework
\begin{gather}
|F_{t\to h^{-1}\tau} \Bigl(\bar{\chi}_T(t) u_h(x,y,t)\Bigr)|\le Ch^{1-d} \ell^{(d-2)/3} \hbar^{s}.
\label{eqn-3.6}
\end{gather}
\end{enumerate}

Here and below $\ell =\ell(x,y)$, 
$\chi \in \sC_0^\infty ([-1,-\frac{1}{2}]\cup [\frac{1}{2},1])$, $\bar{\chi }\in \sC_0^\infty ([-1,1])$ equal $1$ on $[-\frac{1}{2},\frac{1}{2}]$, $s$ is an arbitrarily large exponent and $\delta>0$ is an arbitrarily small exponent.
\end{proposition}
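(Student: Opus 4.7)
The plan is to prove both estimates by the rescaling argument used throughout Section~\ref{sect-2}, combined with the standard oscillatory-integral construction of the propagator $u_h$ under $x_1$-microhyperbolicity. Set $T=\max(C_0\ell^{1/2},h^{1/3})$ and rescale $x=T^2x'$, $y=T^2y'$, $t=Tt'$, $\tau=T^2\tau'$; under this dilation $A=T^2A'_\hbar$, where $A'_\hbar$ is a second order semiclassical operator with effective parameter $\hbar=hT^{-3}\le 1$, and the normalization (\ref{eqn-3.1}) with $W(0)=1$ ensures that $x_1'$-microhyperbolicity is preserved at unit scale. Since $u_h(x,y,t)=T^{-2d}u'_\hbar(x',y',t')$,
\begin{gather*}
F_{t\to h^{-1}\tau}\bigl[\chi_T(t)u_h\bigr]=T^{1-2d}\,F_{t'\to\hbar^{-1}\tau'}\bigl[\chi(t')u'_\hbar\bigr],
\end{gather*}
and a short computation (using $\hbar\le 1$ and $\ell\asymp T^2$ when $T\ne h^{1/3}$, resp.\ $\ell\le h^{2/3}$ when $T=h^{1/3}$) shows $T^{1-2d}\hbar^{-d}\le Ch^{1-d}\ell^{(d-2)/3}$. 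It therefore suffices to show the rescaled Fourier transform is $O(\hbar^{-d+N})$ for every $N$.

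For Part~(i), I would construct $u'_\hbar$ as an oscillatory integral by the standard geometric-optics scheme: take a partial Fourier transform in $x_1'$ to reduce to a first-order hyperbolic system in $x_1'$, then solve by successive approximations using $x_1'$-microhyperbolicity. The resulting representation of $F_{t'\to\hbar^{-1}\tau'}[\chi(t')u'_\hbar]$ is an oscillatory integral over $(t',\xi_1',\xi')$ whose critical points correspond to Hamiltonian trajectories from $y'$ to $x'$ at energy $\tau'$ and time $t'\in\supp\chi$. Since $|\tau'|\le\epsilon$ and $|t'|\in[\tfrac12,1]$ are bounded and bounded away from $0$, microhyperbolicity combined with $V=x_1W$, $W(0)=1$ forbids such trajectories (along any trajectory $x_1'$ moves linearly in $t'$, hence cannot match both $x_1'$ and $y_1'$), and non-stationary phase yields $O(\hbar^N)$ for every $N$.

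For Part~(ii) the only new issue is the contribution of $|t'|\ll 1$ coming from $\bar\chi_T$. The hypothesis $\ell^0\ge\epsilon_0\ell$ rescales to $|x'-y'|\gtrsim 1$, so by finite propagation speed for $A'_\hbar$ (whose principal symbol is bounded on the relevant energy window), $u'_\hbar(x',y',t')=O(\hbar^N)$ for $|t'|\le c_0$ with $c_0$ sufficiently small. Writing $\bar\chi_T=\chi_{0,T}+\tilde\chi_T$ with $\chi_{0,T}$ supported in $|t|\le c_0T$ and $\tilde\chi_T$ supported in $|t|\asymp T$, the first piece is absorbed into the $O(\hbar^N)$ error and the second is controlled by Part~(i) applied to $\tilde\chi$ in place of $\chi$.

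The main obstacle will be the borderline regime $T=h^{1/3}$ (so $\hbar\asymp 1$), where non-stationary phase provides no small parameter. There the required decay must be extracted instead from the explicit toy-model formulas of Section~\ref{sect-2}: one compares $A$ to its toy-model $\bar A_h$, invokes the estimate of Proposition~\ref{prop-2.3}, and controls the difference of propagators by a standard perturbation argument using Duhamel's formula against the $\sC^K$ regularity of the coefficients. A secondary technical point is the bookkeeping of the Jacobians $T^{-2d}$ from the propagator, $T$ from the $t$-integration, and $(2\pi\hbar)^{-d}$ from the Fourier representation, so that the exponent $(d-2)/3$ of $\ell$ in the final bound emerges cleanly rather than the naive $(d-2)/2$.
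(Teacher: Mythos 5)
Your overall route---reduce to unit scale by the rescaling $x\mapsto T^{-2}x$, $\tau\mapsto T^{-2}\tau$, $h\mapsto\hbar=T^{-3}h$, then run propagation arguments---is the same as the paper's, which treats $T$ of constant size first and rescales afterwards. The genuine gap is in your justification of part (i). Under microhyperbolicity it is $\xi_1$, not $x_1$, that moves with speed bounded away from zero ($\dot\xi_1=-\partial_{x_1}a\approx -W\approx -1$), and consequently $x_1$ moves \emph{quadratically} in $t$, exactly as in Proposition~\ref{prop-2.1}(i). In particular, trajectories joining $y$ to $x$ on energy levels $|\tau|\le\epsilon T^2$ \emph{do} exist: these are the short loops that microhyperbolicity (unlike $\xi$-microhyperbolicity) permits; they have travel times $\asymp\sqrt{\ell}$ and they generate the four stationary points and the correction terms of Sections~\ref{sect-2}--\ref{sect-3}. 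If your parenthetical argument (``$x_1'$ moves linearly in $t'$, hence cannot match both $x_1'$ and $y_1'$'') were correct, there would be no stationary points at all and the main terms of the paper would vanish. What actually makes (\ref{eqn-3.5}) true is quantitative: $\supp\chi_T\subset\{T/2\le |t|\le T\}$ with $T\ge C_0\sqrt{\ell}$ and $C_0$ \emph{large}, so the loop times $\asymp\sqrt{\ell}$ fall outside the support, while for $|t|\ge T/2$ (up to a small constant) the linear growth of $\xi_1$ combined with the energy constraint forces $|x_1(t)-x_1(0)|\gtrsim t^2\gg\ell$, and for large $|\xi_1|$ one is in the elliptic zone. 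This is precisely the paper's ``propagation speed with respect to $\xi_1$ is disjoint from $0$ plus ellipticity''; you never invoke the largeness of $C_0$, never treat the large-$\xi$ region, and your stated reason for the absence of critical points is false.

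For part (ii) your gluing leaves an uncovered range. Finite propagation speed handles $|t'|\le c_0$ with $c_0\sim\epsilon_0C_0^{-2}$ (after rescaling $|x'-y'|\ge\epsilon_0C_0^{-2}$, which is a small constant, not $\gtrsim 1$ uniformly in $C_0$), while your part (i) covers $|t'|\in[\frac{1}{2},1]$; in between lie precisely the rescaled loop times $\asymp C_0^{-1}$, and when $\ell^0\asymp\ell$ with $(x,y)$ in the regular zone (e.g.\ $x_1=y_1$, $|x_2-y_2|\asymp x_1$) connecting trajectories genuinely exist there, so neither non-stationary phase nor finite speed applies, and their stationary-phase contribution is of the size that produces the bounds of Proposition~\ref{prop-2.4}, not $O(h^{1-d}\ell^{(d-2)/3}\hbar^{s})$. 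Your proposal does not address this regime (the paper's own one-line appeal to bounded propagation speeds is equally silent about it, so at minimum you should explain how the cut-off or the hypotheses exclude these loops). Finally, your ``main obstacle'' at $T=h^{1/3}$ is a misdiagnosis: there $\hbar\asymp 1$, the factor $\hbar^{s}$ yields no gain and (\ref{eqn-3.5}) reduces to the crude rescaled bound, so no comparison with the toy model is needed; moreover, by (\ref{eqn-3.4}) one has $\ell\gtrsim h^{2/3}$, hence $T=C_0\sqrt{\ell}$ in any case.
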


\begin{proof}
\begin{enumerate}[wide,label=(\alph*),  labelindent=0pt]
\item
Consider first $T$ which is a small constant: $T=\epsilon_3$  (which we can assume is $1$; otherwise we achieve it by scaling) and assume only that $T\ge C_0\sqrt{\ell}$.
Then Statement (i) follows from the fact that the propagation speed with respect to $\xi_1$ is disjoint from $0$ plus ellipticity arguments. 

Statement (ii) follows from the fact that he propagation speeds with respect to $x$ and $\xi $ are bounded.

\item
Then we scale $x\mapsto T^{-2}x$, $y\mapsto T^{-2}y$, $\ell \mapsto T^{-2}\ell$, $\tau\mapsto T^{-2}\tau$ and  $h\mapsto \hbar = T^{-3}h$.
\end{enumerate}
\end{proof}

\begin{proposition}\label{prop-3.2}
The following estimates hold for $\tau\colon |\tau|\le \epsilon$, $T\colon h^{\frac{2}{3}}\le T\le \epsilon_1$:
\begin{gather}
|F_{t \to h^{-1}\tau }\Bigl(\bar{\chi}_T(t)u_h(x,y,t)\Bigr)|\le Ch^{1-d},
\label{eqn-3.7}\\
|e_h(x,y,\tau+h)-e_h(x,y,\tau)\le Ch^{1-d},
\label{eqn-3.8}\\
\intertext{and for $T\ge  T^*\coloneqq C_0\max(\sqrt{\ell},\, h^{\frac{1}{3}})$}
|e_h(x,y,0)-e^\T_{T,h} (x,y,0)|\le Ch^{-2d/3}\hbar^s+Ch^{1-d} 
\label{eqn-3.9}
\end{gather}
again with $\hbar= T^{-3}h$.
\end{proposition}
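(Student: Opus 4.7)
All three bounds are linked: (\ref{eqn-3.7}) is the underlying microlocal estimate, and (\ref{eqn-3.8}) and (\ref{eqn-3.9}) follow from it by the standard Tauberian scheme of \cite{monsterbook}. For (\ref{eqn-3.7}) the plan is a dyadic decomposition in time. Split
\begin{gather*}
\bar{\chi}_T(t) = \bar{\chi}_{T^*}(t) + \sum_{j\ge 0} \chi_{T_j}(t),
\end{gather*}
where $T_j = 2^j T^*$ runs dyadically from $T^* = C_0\max(\sqrt{\ell},h^{1/3})$ up to a scale comparable with $T$, and $\chi_{T_j}(t)=\chi(t/T_j)$ is supported in $|t|\sim T_j$. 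To each annular piece $\chi_{T_j}u_h$ I would apply Proposition~\ref{prop-3.1}(i), obtaining $|F_{t\to h^{-1}\tau}(\chi_{T_j}u_h)|\le Ch^{1-d}\ell^{(d-2)/3}\hbar_j^s$ with $\hbar_j=T_j^{-3}h$. Since $\hbar_j^s$ decays geometrically in $j$ and $\hbar_0\le 1$ by the very definition of $T^*$, the sum telescopes to $Ch^{1-d}\ell^{(d-2)/3}\cdot O(1)=O(h^{1-d})$, using $\ell\le 1$ so $\ell^{(d-2)/3}$ is bounded for $d\ge 2$.

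For the small-time core $\bar{\chi}_{T^*}u_h$, Proposition~\ref{prop-3.1} no longer applies, and this is where the toy-model analysis of Section~\ref{sect-2} enters. In the coordinates fixed by $\textup{(\ref*{eqn-3.1})}$--$\textup{(\ref*{eqn-3.3})}$ the operator $A_h$ agrees with the generalized toy-model $\bar{A}_{k,h}$ of (\ref{eqn-2.31}) modulo lower-order perturbations that may be absorbed by successive approximations on the time interval $|t|\le T^*$. The contribution of $\bar{\chi}_{T^*}u_h$ to $F_{t\to h^{-1}\tau}$ is then, up to negligible errors, the toy-model expression; under microhyperbolicity and $|\tau|\le\epsilon$ the relevant stationary-phase analysis (Propositions~\ref{prop-2.3} and~\ref{prop-2.4}) gives the required $Ch^{1-d}$ bound. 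Alternatively one may build a direct FIO parametrix $u_h=(2\pi h)^{-d}\int e^{ih^{-1}\phi(x,y,t,\xi)}a(x,y,t,\xi;h)\,d\xi$, apply $F_{t\to h^{-1}\tau}$ and perform stationary phase in $(t,\xi)$ along trajectories with $|t|\le T^*$ on the energy shell $\tau$.

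With (\ref{eqn-3.7}) in hand, (\ref{eqn-3.8}) is the standard spectral-window estimate: write
\begin{gather*}
e_h(x,y,\tau+h)-e_h(x,y,\tau) = \frac{1}{h}\int_\tau^{\tau+h} F_{t\to h^{-1}\tau'}\bigl(\bar{\chi}_T(t)u_h\bigr)\,d\tau' + (\text{mollifier error}),
\end{gather*}
and bound the right-hand side by (\ref{eqn-3.7}), the mollifier error being itself of the form to which Proposition~\ref{prop-3.1} applies. For (\ref{eqn-3.9}), one uses the Tauberian identity expressing $e_h(x,y,0)-e^\T_{T,h}(x,y,0)$ as $h^{-1}\int_{-\infty}^0 F_{t\to h^{-1}\tau'}\bigl((1-\bar{\chi}_T)(t)u_h\bigr)\,d\tau'$, and decomposes $1-\bar{\chi}_T$ into a scale-$T$ annular piece together with a dyadic tail at scales $\gg T$; Proposition~\ref{prop-3.1} applied to each scale yields the $Ch^{-2d/3}\hbar^s$ term from the scale immediately above $T$, while the geometric tail sums to $Ch^{1-d}$.

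The main obstacle is the small-time piece $\bar{\chi}_{T^*}u_h$ in step one: the absence of propagation-speed separation on $|t|\lesssim T^*$ puts it outside the scope of Proposition~\ref{prop-3.1}, and one has to rely on the explicit toy-model calculations of Section~\ref{sect-2} together with successive approximations to close the estimate uniformly for all $x,y$ and all admissible $\ell$.
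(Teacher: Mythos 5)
Your dyadic decomposition for \textup{(\ref{eqn-3.7})} matches the paper's, and the tail summation using Proposition~\ref{prop-3.1}(i) with $\hbar_j \le 1$ is right. Two remarks on the small-time core $\bar\chi_{T^*}u_h$: first, Proposition~\ref{prop-3.1}(ii), i.e.\ estimate \textup{(\ref{eqn-3.6})}, is precisely about $\bar\chi_{T^*}u_h$ and does apply whenever $\ell^0\ge\epsilon_0\ell$, so the claim that Proposition~\ref{prop-3.1} ``no longer applies'' to that piece is too strong; second, in the complementary regime $\ell^0\le\epsilon\ell$ the paper dispenses with the toy-model entirely and simply rescales by $T^*$ to get $Ch^{1-d}T^{*\,2(d-2)/3}\le Ch^{1-d}$. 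Your route through Propositions~\ref{prop-2.3}--\ref{prop-2.4} would also work but is a detour; the short argument is pure scaling.

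The genuine gap is in \textup{(\ref{eqn-3.8})}. For $x\ne y$ the measure $d_\tau e_h(x,y,\tau)$ is not nonnegative, so the standard Tauberian step (Part~I) --- which converts a bound on $F_{t\to h^{-1}\tau}(\bar\chi_T u_h)$ into a bound on the increment of $e_h$ over an $h$-window --- is not available off the diagonal. The identity you write,
\begin{gather*}
e_h(x,y,\tau+h)-e_h(x,y,\tau) = \frac{1}{h}\int_\tau^{\tau+h} F_{t\to h^{-1}\tau'}\bigl(\bar{\chi}_T(t)u_h\bigr)\,d\tau' + (\text{mollifier error}),
\end{gather*}
is actually the identity for the increment of the \emph{Tauberian} expression $e^\T_{T,h}$; the ``mollifier error'' is then $\bigl(e_h-e^\T_{T,h}\bigr)(x,y,\tau+h)-\bigl(e_h-e^\T_{T,h}\bigr)(x,y,\tau)$, and estimating it is precisely \textup{(\ref{eqn-3.9})}, which in turn needs \textup{(\ref{eqn-3.8})} --- a circularity. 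The paper's key step, which you are missing, is to prove \textup{(\ref{eqn-3.8})} on the diagonal $x=y$ (where $d_\tau e_h(x,x,\tau)\ge 0$ and Tauberian Part~I applies) and then exploit positivity of the spectral projector onto $(\tau,\tau+h]$ to pass to the off-diagonal case via the kernel Cauchy--Schwarz inequality
\begin{gather*}
|e_h(x,y,\tau+h)-e_h(x,y,\tau)| \le |e_h(x,x,\tau+h)-e_h(x,x,\tau)|^{\frac{1}{2}}\, |e_h(y,y,\tau+h)-e_h(y,y,\tau)|^{\frac{1}{2}}.
\end{gather*}
Without this reduction, the argument for \textup{(\ref{eqn-3.8})} --- and hence the Tauberian Part~II step that gives \textup{(\ref{eqn-3.9})} with $T=\epsilon_1$ before the dyadic descent to $T^*$ --- does not close.
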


\begin{proof}
\begin{enumerate}[wide,label=(\alph*),  labelindent=0pt]
\item
It is sufficiently to prove for $\tau=0$ since shift of $V$ by $\tau\colon |\tau |\le \epsilon_2$ preserves conditions\footnote{\label{foot-3} Albeit changes $x_1$ and $\ell(x,y)$.}.
Simple scaling shows that if $\ell^0(x,y)\le \epsilon \ell(x,y)$ then with $T=T^*$  the left-hand expression of (\ref{eqn-3.7}) with $T=T^*$ does not exceed  $Ch^{1-d}T^{*\, 2(d-2)/3}$. 

On the other hand, considering partition by $T\colon T^* \le T\le \epsilon_1$ we see from Proposition~\ref{prop-3.1}(i) that the same  left-hand expression but with $\bar{\chi}_T(t)$ replaced by $\bar{\chi}_T(t)-\bar{\chi}_{T^*}(t)$ also does not exceed this. So, with indicated $T$ the same left-hand expression does not exceed this, which is $O(h^{1-d})$. 

Finally, for $\ell^0(x,y)\ge \epsilon \ell(x,y)$ the same estimate (\ref{eqn-3.7}) holds again due to  (\ref{eqn-3.5}).

\item
In particular, taking $x=y$ and $T=\epsilon_1$ and applying standard Tauberian arguments, Part I, we conclude that (\ref{eqn-3.8}) holds with $x=y$. And then it holds for $x\ne y$ due to
\begin{multline*}
|e_h(x,y,\tau+h)-e_h(x,y,\tau)|\\ \le |e_h(x,x,\tau+h)-e_h(x,x,\tau)|^{\frac{1}{2}}  |e_h(y,y,\tau+h)-e_h(y,y,\tau)|^{\frac{1}{2}} .
\end{multline*}
\item
Then (\ref{eqn-3.9}) holds with $T=\epsilon_1$ due to standard Tauberian arguments, Part II. Then due to Proposition~\ref{prop-3.1}(i) 
\begin{gather*}
|e^\T_{2T,h}(x,y,0) -e^\T_{T,h}(x,y,0)|\le CT^{\prime\, d-3}\hbar^s
\end{gather*}
and summation by partition over $[T,\epsilon_1]$ results in the same answer  which implies (\ref{eqn-3.9}).
\end{enumerate}
\end{proof}

Therefore  
\begin{claim}\label{eqn-3.10}
To evaluate $e_h(x,y,\tau)$ modulo $O(h^{1-d})$ it is enough to consider only $T\le  C_0\max(\sqrt{\ell(x,y)},\, h^{\frac{1}{3}-\delta})$. 
\end{claim}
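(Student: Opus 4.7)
The plan is to obtain (\ref{eqn-3.10}) as a direct corollary of estimate (\ref{eqn-3.9}) in Proposition~\ref{prop-3.2}. I would set $T_0 \coloneqq C_0\max(\sqrt{\ell(x,y)},\, h^{\frac{1}{3}-\delta})$ with the same constant $C_0$ as in the proposition. Since $h^{\frac{1}{3}-\delta}\ge h^{\frac{1}{3}}$ for small $h$, this $T_0$ satisfies the hypothesis $T\ge T^{\ast} = C_0\max(\sqrt{\ell},\, h^{\frac{1}{3}})$, so (\ref{eqn-3.9}) is applicable at this $T_0$.

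The only real computation is to absorb the first error term in (\ref{eqn-3.9}) into $O(h^{1-d})$. Writing $\hbar_0 = T_0^{-3}h$ and using $T_0\ge h^{\frac{1}{3}-\delta}$, we have $\hbar_0\le h^{3\delta}$, hence
\begin{gather*}
Ch^{-\frac{2d}{3}}\hbar_0^s \ \le\ C\, h^{3\delta s-\frac{2d}{3}}.
\end{gather*}
Choosing $s$ large enough (depending only on $d$ and $\delta$) so that $3\delta s\ge 1 - \frac{d}{3}$, this term is $O(h^{1-d})$. Combined with the $Ch^{1-d}$ term already present in (\ref{eqn-3.9}), we obtain
\begin{gather*}
|e_h(x,y,0)-e^\T_{T_0,h}(x,y,0)|\ \le\ Ch^{1-d},
\end{gather*}
which is exactly the content of (\ref{eqn-3.10}). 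Finally, the reduction to $\tau=0$ is harmless, since a shift of $V$ by $\tau$ with $|\tau|\le \epsilon_2$ preserves the microhyperbolicity assumption (cf.\ footnote~3 in the proof of Proposition~\ref{prop-3.2}).

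The substantive work has all been done upstream: Proposition~\ref{prop-3.1} provides the fast decay of the cut-off Fourier transforms (via the propagation-speed estimates and the rescaling $h\mapsto \hbar = T^{-3}h$), and the Tauberian reduction steps of Proposition~\ref{prop-3.2} convert this decay into a comparison between $e_h$ and $e^\T_{T,h}$. There is no real obstacle at this step beyond the exponent bookkeeping above; the only mildly delicate point is that $\delta>0$ can be taken arbitrarily small, which is consistent with how Proposition~\ref{prop-3.1} is stated.
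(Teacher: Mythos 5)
Your proposal is correct and follows essentially the same route as the paper, which presents (\ref{eqn-3.10}) as an immediate consequence of estimate (\ref{eqn-3.9}): taking $T_0=C_0\max(\sqrt{\ell},h^{\frac{1}{3}-\delta})$ gives $\hbar_0=T_0^{-3}h\le h^{3\delta}$, so the term $Ch^{-2d/3}\hbar_0^{s}$ is absorbed into $O(h^{1-d})$ by choosing $s$ large. Your explicit exponent bookkeeping and the remark on reducing to $\tau=0$ match the paper's intent; nothing is missing.
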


\section{Propagator as an oscillatory integral}
\label{sect-3.2}

\begin{proposition}\label{prop-3.3}
Let  $x,y\in B(0,\epsilon  )$, $C_0\sqrt{\ell} \le T\le C_0 \epsilon )$ and $|\tau|\le \epsilon$. Then
\begin{multline}
F_{t\to h^{-1}\tau} \Bigl(\bar{\chi}_T(t)u_h(x,y,t)\Bigr) \\
=
(2\pi h)^{-d} \iint e^{i h^{-1}\varphi(x,y, \xi_1,\eta_1, \xi',\tau)} b  (x,y, \xi_1,\eta_1,\xi',\tau, h)\,d\xi_1d\eta_1 +O(h^s)
\label{eqn-3.11}
\end{multline}
where phase function $\varphi(x,y, \xi_1,\eta_1, \xi',\tau)$ differs from the phase function $\bar{\varphi} (x,y, \xi_1,\eta_1, \xi',\tau)$  in \textup{(\ref{eqn-2.7})}
by $O(T^4)$ and amplitude $b  (x,y, \xi_1,\eta_1,\xi',\tau,h)$ differs from the  amplitude $\bar{b}_{k} (x,y, \xi_1,\eta_1,\xi',\tau,h)$ there by $O(T)$.
\end{proposition}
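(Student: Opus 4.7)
The approach is to imitate the explicit Fourier-analytic construction of Section~\ref{sect-2.2} in the variable-coefficient setting, exploiting the fact that the normalizing assumptions (\ref{eqn-3.1})--(\ref{eqn-3.3}) make $A$ a small perturbation of the toy-model $\bar{A}_h$ near the origin and, crucially, endow it with $x_1$-microhyperbolicity (since $\partial_{x_1}V=W+x_1\partial_{x_1}W$ is close to $1$).

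First I would apply the partial $h$-Fourier transform in $(t,x)$ to the equations (\ref{eqn-2.3}) written now for the general $A$. The leading $-x_1$ piece of $V$ contributes a $+ih\partial_{\xi_1}$ derivative exactly as in (\ref{eqn-2.4}); the residual $x$-dependence of $W$, $g^{jk}$ and $V_j$ enters as lower-order pseudodifferential perturbations, small on $\supp\bar{\chi}_T$ once the propagation-speed bounds of Proposition~\ref{prop-3.1} confine the relevant region to $|x|,|y|\lesssim T^2$, $|\xi|\lesssim T$. This turns the propagator equation into a first-order transport-type equation along~$\xi_1$. I would solve it by the geometric-optics ansatz $\widehat{u_h^\pm}=\int_{\mp\infty}^{\xi_1}e^{ih^{-1}\Psi}c\,d\eta_1$ with $\Psi$ obeying a Hamilton--Jacobi equation and initial data $\Psi|_{\xi_1=\eta_1}=-y_1\eta_1-\langle y',\xi'\rangle$, and with $c$ determined by the usual transport hierarchy. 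The expression (\ref{eqn-3.11}) then follows upon partial inverse $h$-Fourier transform in $\xi_1$ exactly as in (\ref{eqn-2.6}), with $\varphi=x_1\xi_1+\Psi$ and $b$ read off from $c$.

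To quantify closeness of $\varphi$ and $b$ to their toy-model counterparts, I would rescale as in Proposition~\ref{prop-3.1}(b): send $x\mapsto T^{-2}x$, $y\mapsto T^{-2}y$, $\tau\mapsto T^{-2}\tau$, $h\mapsto \hbar=T^{-3}h$. In these variables $A-\bar{A}$ is of order $T^2$ in the principal symbol (by Taylor expansion of $g^{jk}$, $V_j$ and $W$ about~$0$), the classical action of the toy model is $O(1)$, and $\hbar$ is indeed small whenever $T\ge C_0 h^{1/3}$. Solving the Hamilton--Jacobi equation as a perturbation series around $\bar{\varphi}$ then gives an $O(T^2)$ correction in the rescaled variables; undoing the rescaling turns this into $\varphi-\bar{\varphi}=O(T^4)$. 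A parallel perturbative analysis of the transport equation for $c$ yields $b-\bar{b}=O(T)$.

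The main obstacle will be the uniform validity of the construction down to $T\asymp h^{1/3}$, where $\hbar\asymp 1$ and one cannot appeal to a genuinely small semiclassical parameter; here the $x_1$-microhyperbolicity (equivalently, non-degeneracy of $\partial_{\xi_1}$ along the characteristic flow) is what allows the successive approximations to close and lets us absorb the formal remainders into the stated $O(h^s)$ error. A secondary technical point is the treatment of the cut-off $\bar{\chi}_T$: since it is only smooth, one must separate a geometric-optics contribution supported along trajectories from a residual piece that is negligible by the non-stationary-phase / propagation-speed estimates already used in Proposition~\ref{prop-3.1}.
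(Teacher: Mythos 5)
Your overall strategy — compare to the toy model after rescaling, construct an oscillatory-integral parametrix, and then estimate the relative errors in phase and amplitude — matches the paper, and your error counting in step (c) (relative error $O(T)$ in $A_h-\bar A_h$, hence $O(T^4)$ absolute error in a phase $\varphi=O(T^3)$) is exactly the paper's step. But the parametrix construction you propose is genuinely different and contains a gap.

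The paper applies $h$-Fourier transform \emph{only} in $x_1$ and $y_1$ (with opposite signs), keeping $x'$, $y'$, and $t$ as physical variables. The transformed operator $A(-hD_{\xi_1},x',\xi_1,hD_{x'})$ is then an honest differential operator in $x'$ and an $h$-pseudodifferential operator in $\xi_1$ alone, and the hypothesis $\partial_{x_1}V\ne 0$ becomes $x_1$-microhyperbolicity in the Fourier picture. That puts the problem squarely in the scope of the standard parametrix for a microhyperbolic evolution equation (the paper cites Shubin, Thm.~20.1, or \cite{OOD}, Section~2): one constructs $\hat U_h$ as an oscillatory integral in an auxiliary variable $\theta$, with eikonal conditions (\ref{eqn-3.17})--(\ref{eqn-3.19}) and a transport hierarchy for the amplitude, and only afterwards takes $F_{t\to h^{-1}\tau}$, integrates over the energy surface, and inverts the Fourier transform in $\xi_1,\eta_1$.

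Your proposal instead Fourier transforms in $t$ \emph{and in all of} $x$, replicating (\ref{eqn-2.4})--(\ref{eqn-2.6}) literally. That derivation is exact for the toy model precisely because there $W$, $g^{jk}$, $V_j$ are constant; for variable coefficients, the $x$-dependence of those functions is converted by the full Fourier transform into pseudodifferential operators in $\xi$, acting through $\partial_{\xi'}$ as well as $\partial_{\xi_1}$. The resulting equation is therefore \emph{not} a first-order transport equation along $\xi_1$, and the sentence "the residual $x$-dependence enters as lower-order pseudodifferential perturbations, small on $\supp\bar\chi_T$" papers over the central difficulty: these perturbations are not small in operator norm, they are operators whose symbols are slowly varying, and the ansatz $\int_{\mp\infty}^{\xi_1}e^{ih^{-1}\Psi}c\,d\eta_1$ with a Hamilton--Jacobi equation only in $\xi_1$ does not solve the transformed equation modulo $O(h^s)$ without doing a full microlocal parametrix construction in all of $\xi$. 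In effect you would have to rebuild the machinery that the paper accesses by taking the partial Fourier transform in $x_1$ and $y_1$ only. It is likely your route can be made to work, but you would need to make the pseudodifferential calculus over $\xi$ explicit and justify why the eikonal in $\xi_1$ suffices; as written the step from "first-order transport equation" to the geometric-optics solution is a genuine gap, and the paper's choice of transform is precisely what makes this point trivial.

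Your remarks on the cutoff $\bar\chi_T$, on using Proposition~\ref{prop-3.1} to localize, and on the rescaling down to $T\asymp h^{1/3}$ all agree with the paper.
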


\begin{proof}
\begin{enumerate}[wide,label=(\alph*),  labelindent=0pt]
\item
Let us start from the formal construction.  Let us  apply $h$-Fourier transform  with respect to $x_1$, $y_1$: 
$\hat{u}_h (\xi_1, x',\eta_1, y',t)=F_{x_1\to h^{-1}\xi_1, y_1\to  -h^{-1}\eta_1} u_h$.  Then the problem 
\begin{align}
&(hD_t-A(x,hD_x) )u_h (x,y,t)=0,
\label{eqn-3.12}\\[3pt]
&u_h(x,y,0)=\updelta (x-y)
\label{eqn-3.13}\\
\shortintertext{becomes}
&(hD_t-A(-hD_{\xi_1},x',\xi_1, hD_{x'}) )\hat{u}_h (\xi_1, x', \eta_1,y' ,t)=0,
\label{eqn-3.14}\\[3pt]
&\hat{u}_h (\xi_1, x', \eta_1,y' ,t)=2\pi h \updelta (\xi_1-\eta_1)\updelta (x'-y').
\label{eqn-3.15}.
\end{align}

This operator is $x_1$-microhyperbolic where $x_1$ is a variable dual to $\xi_1$.
Then using the standard arguments\footnote{\label{foot-4} See f.e. \cite{shubin:spectral}, Theorem 20.1 or  \cite{OOD}, Section \ref{OOD-sect-2}.}
 we can construct $\hat{u}_h (\xi_1, x', \eta_1,y' ,t)$
near energy level $0$ for   $x,y \in B(0,\epsilon_1)$  as a ``simple'' oscillatory integral
\begin{multline}
\hat{U}_h (\xi_1, x', \eta_1,y' ,t)\\
= (2\pi h)^{1-d}\int  e^{ih^{-1} (\psi  (\xi_1, x', \eta_1,y' , \theta , t) + a(-\theta_1, y' ,\theta')  t)} b ( \xi_1, x', \eta_1,y' , \theta  ,h ) \,d\theta ,
\label{eqn-3.16}
\end{multline}
with the phase satisfying
\begin{align}
&a(-\partial_{\xi_1}\psi  , x' ,  \partial_{x'}\psi  )= a(-\theta_1, y'  ,\theta'),
\label{eqn-3.17}\\
&\psi  |_{\xi_1=\eta_1}= \langle x'-y',\theta'\rangle ,
\label{eqn-3.18}\\
&\nabla _{\xi_1, x'} \psi  |_{\xi_1=\eta_1, x'=y'}= \theta
\label{eqn-3.19}
\end{align}
and with amplitude $b  (\xi_1,x',\eta_1, y', \theta, h)$ decomposing into asymptotic series
\begin{gather}
b  (\xi_1,x',\eta_1, y', \theta, h)\sim \sum_{n\ge0} b_n (\xi_1,x',\eta_1, y',\theta) h^n.
\label{eqn-3.20}
\end{gather}
Then for $|\tau|\le \epsilon$
\begin{multline}
F_{t\to h^{-1}\tau} \hat{U}_h(x,y,t)\\
= (2\pi h)^{2-d}  \int_{\Sigma (\xi_1,y', \tau)} e^{ih^{-1} (\psi  (\xi_1, x', \eta_1,y' , \theta , \tau ) )} b ( \xi_1, x', \eta_1,y' ,\theta ,h ) \,d\theta :d _\theta a 
\label{eqn-3.21}
\end{multline}
where $\Sigma (\xi_1,y', \tau) =\{ \theta\colon a(-\theta_1, y' , y',\theta')=\tau\}$. Due to  $x_1$-microhyperbolocity  we can express $\theta_1$ as function of 
$\xi_1, \eta_1, x',y', \tau$ and rewrite the right-hand expression as an integral over $\theta' \in \bR^{d-1}$.

Finally, making inverse Fourier transform by $\xi_1,\eta_1$ and plugging $\theta'=\xi'$ we arrive to (\ref{eqn-3.11}) for $u_h$  with
\begin{gather*}
\varphi (x, y, \xi_1,\eta_1,\xi',\tau) = \psi (\xi_1,x',\eta_1,y', \theta_1(\xi_1,x',\eta_1,y',\xi' \tau),\xi', \tau) + x_1\xi_1-y_1\eta_1.
\end{gather*}

\item
To justify (\ref{eqn-3.11})  for $u_h$ we observe that for given $x,y\in B(0,\epsilon  )$ and $T\le C_0\epsilon$ behaviour of$a(x,\xi)$ for $|x|\ge c\epsilon$ does not matter.

\item
Finally, since $\varphi =O(T^3)$ and relative error of $A_h$ in comparison to generalized toy-model operator $\bar{A}_{h}$ is $O(T)$ we conclude that the relative errors in 
$\varphi$ and $B_h$ are also $O(T)$ and therefore absolute errors are $O(T^4)$ and $O(T^2)$ correspondingly.
\end{enumerate}
\end{proof}

Therefore we arrive immediately to the following statements:

\begin{proposition}\label{prop-3.4}
Let operator  $A_h$ satisfy $\textup{(\ref{eqn-3.1})}_{1,2}$, $\textup{(\ref{eqn-3.2})}_{1-3}$ and $\textup{(\ref{eqn-3.3})}_{1-4}$. Then
\begin{enumerate}[wide,label=(\roman*),  labelindent=0pt]
\item
In the framework of Proposition~\ref{prop-2.3} estimate \textup{(\ref{eqn-2.12})} holds.

\item
In the framework of Proposition~\ref{prop-2.4}(i) estimate \textup{(\ref{eqn-2.15})} holds.

\item
In the framework of Proposition~\ref{prop-2.4}(ii) estimate  \textup{(\ref{eqn-2.16})} holds.

\item
In the framework of Proposition~\ref{prop-2.4}(iii) estimate    \textup{(\ref{eqn-2.18})} holds.

\item
In the framework of Proposition~\ref{prop-2.4}(iv) estimate    \textup{(\ref{eqn-2.20})} holds.
\end{enumerate}
\end{proposition}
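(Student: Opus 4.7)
The plan is to combine the Tauberian reduction of Proposition~\ref{prop-3.2} with the oscillatory-integral representation of Proposition~\ref{prop-3.3}, and then copy the stationary-phase arguments of Propositions~\ref{prop-2.3}--\ref{prop-2.4} on the perturbed phase. By the claim (\ref{eqn-3.10}), up to $O(h^{1-d})$ errors only the range $T\le C_0\max(\sqrt{\ell},h^{\frac{1}{3}-\delta})$ contributes, so it suffices to bound $F_{t\to h^{-1}\tau}\bigl(\bar{\chi}_T(t)u_h(x,y,t)\bigr)$ and the corresponding integrated Tauberian expression on this scale.

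First I would insert the representation (\ref{eqn-3.11}) into the Tauberian formula (\ref{eqn-1.18}), obtaining an oscillatory integral with phase $\varphi$ and amplitude $b$ which by Proposition~\ref{prop-3.3} differ from the (generalized) toy-model phase and amplitude by $O(T^4)$ and $O(T)$ respectively. Rescaling $\xi_1,\eta_1\mapsto T\xi_1,T\eta_1$ and passing to the effective Planck constant $\hbar=h/T^3$ converts this into the same semiclassical problem analyzed in Section~\ref{sect-2}, with the perturbation appearing as an $O(T)$ symbol-level correction relative to the toy model; at the scale $T\asymp\sqrt{\ell}$ the absolute phase correction is $O(\ell^2)$, which is of lower order compared to the dominant toy-model values $\asymp \ell^{3/2}$.

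Next I would verify that the stationary points (\ref{eqn-2.10}) (resp.\ (\ref{eqn-2.37})--(\ref{eqn-2.39})) persist: by the implicit function theorem, the perturbation of size $O(T)$ in the symbol displaces each stationary point by $O(T)\ll\sqrt{\ell}$ in $(\xi_1,\eta_1,\xi')$-space, and the Hessian eigenvalues (whose magnitudes were $\asymp\sqrt{\ell}$ in the toy model) change only by an $O(T)$ relative factor. Consequently the classification of $(x,y)$ into the regular zone (\ref{eqn-2.17}), the singular zone (\ref{eqn-2.22}), and the shadow zone (\ref{eqn-2.19}) is preserved modulo $O(T)$ adjustments of the defining constants, and Statements (i)--(v) of Proposition~\ref{prop-3.4} follow directly from the corresponding bounds in Propositions~\ref{prop-2.3}--\ref{prop-2.4} after this identification.

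The main obstacle I anticipate is in the shadow zone of Proposition~\ref{prop-2.4}(iv), where one relies on the lower bound $\asymp\sqrt{\ell}$ for the \emph{imaginary} part of one Hessian eigenvalue to gain the factor $(\ell h^{-\frac{2}{3}})^{-s}$. One must check that the $O(T)$ perturbation of the symbol shifts this eigenvalue by at most $O(T\sqrt{\ell})=o(\sqrt{\ell})$ and, crucially, does not introduce a competing real stationary point inside the relevant contour after deformation; this amounts to a quantitative application of Morse theory for the complex-perturbed phase, uniform in $(x,y)$. Once this quantitative stability is in place, the $x_1$-microhyperbolicity built into (\ref{eqn-3.16})--(\ref{eqn-3.19}) ensures that the amplitude $b$ is elliptic along the relevant critical manifold, and summation over the dyadic partition in $T$ from $T^*$ up to $\epsilon_1$ produces only a geometric-series loss that is absorbed in the stated remainders.
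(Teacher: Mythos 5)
Your proposal follows essentially the same route as the paper: there Proposition~\ref{prop-3.4} is presented as an immediate consequence of Proposition~\ref{prop-3.3}, i.e.\ one transfers the stationary-phase analysis of Propositions~\ref{prop-2.3}--\ref{prop-2.4} to the perturbed phase and amplitude, exactly as you do. One quantitative slip worth fixing: since $T\asymp\sqrt{\ell}$, writing that the stationary points move by ``$O(T)\ll\sqrt{\ell}$'' is incoherent (a shift of order $T$ would be comparable to the separation of the short and long critical points); the correct count --- phase perturbation $O(T^4)$, hence gradient perturbation $O(T^3)$, divided by Hessian eigenvalues $\asymp T$ --- gives a displacement $O(T^2)=O(\ell)\ll\sqrt{\ell}$, which is what actually keeps the critical points, their Hessians, and the regular/singular/shadow classification stable.
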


Also observe that all statements of Remark~\ref{remark-2.5} remain valid.

\begin{corollary}\label{corollary-3.5}
Let  $d\ge 3$.  Then  $e_h(x,y,\tau)=e^\W_h(x,y,\tau) +O(h^{1-d})$. Furthermore 
$e_h(x,y,0)=O(h^{1-d})$ unless $\eff(x)\asymp \eff (y)\ge h^{\frac{2}{3}}$ and $\ell^0(x,y)\le \epsilon \eff(x)$.
\end{corollary}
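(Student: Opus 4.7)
The plan has two parts corresponding to the two assertions. Part A (main asymptotic) is a short composition of already-stated propositions; Part B (complementary bound) is a direct estimate on $e_h^\W$.

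For Part A, I would apply Proposition~\ref{prop-3.2} with $T=\epsilon_1$ a small positive constant, so that $\hbar=\epsilon_1^{-3}h\asymp h$ and the $h^{-2d/3}\hbar^s$ term in \textup{(\ref{eqn-3.9})} is $O(h^s)$, yielding $e_h(x,y,0)=e^\T_{\epsilon_1,h}(x,y,0)+O(h^{1-d})$. It then remains to show $e^\T_{\epsilon_1,h}(x,y,0)-e_h^\W(x,y,0)=O(h^{1-d})$, which I would obtain by transplanting Proposition~\ref{prop-2.6}(i) to the general-operator setting. When $\ell(x,y)\le h^{\frac{2}{3}}$, both terms are $O(h^{-2d/3})$ by direct size estimation of their integral representations, and $h^{-2d/3}=O(h^{1-d})$ precisely for $d\ge 3$. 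When $\ell(x,y)\ge h^{\frac{2}{3}}$, I rescale $x\mapsto\ell^{-1}x$, $y\mapsto\ell^{-1}y$, $h\mapsto\hbar=\ell^{-3/2}h$; Proposition~\ref{prop-3.3} guarantees that the phase and amplitude perturbations relative to the toy model (of sizes $O(T^4)$ and $O(T)$ respectively, with $T\asymp\sqrt{\ell}$) become bounded at unit scale after rescaling, so the unit-scale estimate of Proposition~\ref{prop-2.6}(i) applies. The resulting error is $O(\hbar^{1-d}\ell^{-d})=O(h^{1-d}\ell^{(d-3)/2})$, which is $O(h^{1-d})$ for $d\ge 3$ and $\ell\le 1$.

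For Part B, given Part A, it suffices to estimate $e_h^\W(x,y,0)$ itself. Set $\bar z=\tfrac{1}{2}(x+y)$ and $R=\sqrt{(-V(\bar z))_+}$, so that \textup{(\ref{eqn-1.8})} realizes $e_h^\W(x,y,0)$ as $(2\pi h)^{-d}$ times the Fourier transform at $(x-y)/h$ of the characteristic function of the ellipsoid $\{|\xi|^2_{g^{-1}(\bar z)}\le -V(\bar z)\}$. Microhyperbolicity together with a first-order Taylor expansion of $V$ give $\eff(x)\asymp\eff(y)\asymp\eff(\bar z)$ whenever $\ell^0(x,y)\lesssim\eff(\bar z)$, so the failure of the excluded region clause forces either $\eff(\bar z)\le Ch^{\frac{2}{3}}$ or $|x-y|\ge\epsilon\,\eff(\bar z)$. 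In the first case the volume bound gives $|e_h^\W|\le Ch^{-2d/3}\le Ch^{1-d}$; in the second, Bessel/stationary-phase asymptotics for the Fourier transform of an indicator function yield
\begin{equation*}
|e_h^\W(x,y,0)|\le C h^{(1-d)/2}R^{(d-1)/2}|x-y|^{-(d+1)/2},
\end{equation*}
and inserting $|x-y|\ge\epsilon R^2$ reduces this to $Ch^{(1-d)/2}R^{-(d+3)/2}$; combined with $R\ge h^{\frac{1}{3}}$ this is $O(h^{1-d})$ exactly when $(d-1)/(d+3)\le 1/3$, i.e.\ for $d\ge 3$.

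The main obstacle is Part B: marshalling the exclusion clauses uniformly and extracting the sharp Fourier-transform decay, with $d=3$ saturating the exponent arithmetic. Part A is a routine assembly of the earlier machinery once Proposition~\ref{prop-3.3} is invoked to transplant the toy-model passage of Proposition~\ref{prop-2.6}(i) to the general operator.
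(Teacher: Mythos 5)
Your proposal is correct and follows essentially the paper's own route: Tauberian reduction via Proposition~\ref{prop-3.2}, rescaling/comparison with the toy model (Proposition~\ref{prop-2.6}(i) transplanted through Proposition~\ref{prop-3.3}), and a stationary-phase bound on $e^\W_h$ in the complementary region, the only difference being bookkeeping (the paper proves the asymptotics by rescaling only in the near-diagonal case and bounds $e^\T$ and $e^\W$ separately by $O(h^{1-d})$ otherwise, while you prove the asymptotics uniformly and then bound $e^\W_h$ explicitly). One small slip: your final criterion should read $(d-1)/(d+3)\ge \tfrac{1}{3}$ rather than $\le$, which is indeed equivalent to $d\ge 3$ and consistent with the estimates you derived.
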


\begin{proof}
If $\ell ^0(x,y)\le \epsilon  \ell(x,y)$, $\ell (x,y)\ge h^{\frac{2}{3}}$ we prove estimate~(\ref{eqn-1.7}) by simple rescaling. Otherwise the simple rescaling shows that 
$|e^\T_{T,h} (x,y,0)|\le Ch^{1-d}$ and therefore $|e_h (x,y,0)|\le Ch^{1-d}$ and  one can see easily that in this case  also $|e^\W _h(x,y,0)|\le Ch^{1-d}$  due to stationary phase principle. 
\end{proof}

So, Theorem~\ref{thm-1.1} has been proven. From now we consider only $d=2$.

\begin{proposition}\label{prop-3.6}
Let $d=2$,  $\ell (x,y)  \le \epsilon_1$ and $\ell^0(x,y)\le \epsilon \ell(x,y)$.  Then
\begin{gather}
|e^\T_{T_*,h} (x,y,0)- e^\W _h(x,y,0)|\le Ch^{-1}+ C\ell^{-2}.
\label{eqn-3.22}
\end{gather}
\end{proposition}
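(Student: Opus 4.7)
The plan is to compare both $e^\T_{T_*,h}(x,y,0)$ and $e^\W_h(x,y,0)$ via the stationary phase method on their respective oscillatory integral representations, identify the common short-trajectory contribution, and bound the remaining long-trajectory contribution using the analysis already carried out in Proposition~\ref{prop-2.4}(i).

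First, using Proposition~\ref{prop-3.3}, I would write $e^\T_{T_*,h}(x,y,0)$ (after integration of $F_{t\to h^{-1}\tau}(\bar\chi_{T_*}u_h)$ in $\tau$ over $(-\infty,0]$ and division by $h$) as a triple oscillatory integral in $(\xi_1,\eta_1,\xi')$ whose phase $\varphi$ differs from the generalized toy-model phase $\bar\varphi_k$ of Section~\ref{sect-2.3} by $O(T_*^4)=O(\ell^2)$, and whose amplitude differs by $O(T_*)=O(\ell^{1/2})$. Under the standing hypothesis $\ell^0(x,y)\le \epsilon\ell(x,y)$ we are in the regular zone of Proposition~\ref{prop-2.4}(i), so the Hessian of $\varphi$ has three real eigenvalues of order $\sqrt\ell$, and stationary phase applies at each of the four stationary points described in Proposition~\ref{prop-2.3} (two short, two long).

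Second, I would perform a parallel stationary-phase analysis on $e^\W_h(x,y,0)$ using its defining Fourier representation \textup{(\ref{eqn-1.8})}, or equivalently the polar form \textup{(\ref{eqn-2.29})}. The Weyl phase $\langle x-y,\xi\rangle$ restricted to $\{a(\tfrac12(x+y),\xi)=0\}$ has critical points whose momenta agree with those associated to the two short rays of the propagator in the limit $\ell\to 0$, so the short-trajectory contributions to $e^\T_{T_*,h}$ should coincide with those to $e^\W_h$ up to subleading stationary-phase terms. A rescaling $x\mapsto\ell^{-1}x$, $y\mapsto\ell^{-1}y$, $h\mapsto\hbar=\ell^{-3/2}h$ reduces the verification to the case $\ell=1$ (so to the generalized toy-model setting of Proposition~\ref{prop-2.8}), where the matching is essentially a consequence of the transport-equation construction of the amplitude in Proposition~\ref{prop-3.3}.

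Third, the residual discrepancy comes from three sources: (a) the two long-trajectory stationary points of $e^\T_{T_*,h}$, which have no counterpart in $e^\W_h$ and are each bounded by $Ch^{-2}\ell^{-3/4}(h\ell^{-1/2})^{3/2}=Ch^{-1/2}\ell^{-5/4}$ exactly as in the proof of Proposition~\ref{prop-2.4}(i); (b) subleading terms in the stationary phase expansions at the short points, of the same order $O(h^{-1/2}\ell^{-5/4})$ or smaller; and (c) errors from the $O(\ell^2)$ phase and $O(\ell^{1/2})$ amplitude perturbations, which after stationary phase contribute at most $Ch^{-1/2}\ell^{-5/4}$ as well. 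For $\ell\ge h^{2/3}$ one has $h^{-1/2}\ell^{-5/4}\le\ell^{-2}$, and for $\ell\ge h^{2/5}$ one has $h^{-1/2}\ell^{-5/4}\le h^{-1}$, so every such term is bounded by $Ch^{-1}+C\ell^{-2}$.

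The main obstacle will be in step two: verifying that the short-trajectory contributions to the Tauberian and Weyl expressions cancel rather than merely coinciding in order of magnitude. This requires a careful identification of the phases, amplitudes, and Maslov indices at the two short stationary points — in the general (non-toy) case the amplitudes are only known through the asymptotic series \textup{(\ref{eqn-3.20})}, so one needs to use the transport equations to match them against the Weyl symbol. The rescaling to $\hbar=\ell^{-3/2}h$ localizes this matching to a small perturbation of the generalized toy-model, reducing it to the identities implicit in Proposition~\ref{prop-2.6}(ii)–\ref{prop-2.8}, where the cancellation is visible explicitly.
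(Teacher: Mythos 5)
There is a genuine gap, and it sits exactly where the proposition is actually needed. First, you treat $e^\T_{T_*,h}$ as containing all four stationary points, including the two long-trajectory ones. In the paper $T_*=\epsilon'\sqrt{\ell}$ with an arbitrarily small constant $\epsilon'$ (recalled in the proof of Proposition~\ref{prop-3.8}), so the cutoff $\bar{\chi}_{T_*}(t)$ excludes the long trajectories, whose travel time is $\asymp\sqrt{\ell}$ with a fixed constant; only the short-time, short-trajectory part of the propagator enters, and that is the whole point of taking $T_*$ small. Second, and decisively, your final arithmetic is reversed: $h^{-1/2}\ell^{-5/4}\le\ell^{-2}$ holds iff $\ell\le h^{2/3}$, not $\ell\ge h^{2/3}$. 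Hence in the range $h^{2/3}\ll\ell\ll h^{2/5}$ --- precisely the range in which Proposition~\ref{prop-3.6} is invoked in the proof of Proposition~\ref{prop-3.8}, cf.\ \textup{(\ref{eqn-3.27})} --- the quantity $h^{-1/2}\ell^{-5/4}$ dominates both $h^{-1}$ and $\ell^{-2}$, so none of your error sources (a)--(c) is bounded by $Ch^{-1}+C\ell^{-2}$ there. This is not a repairable bookkeeping slip inside your scheme: in the regular zone the long-trajectory (loop) contribution genuinely has magnitude $\asymp h^{-1/2}\ell^{-5/4}$ (it is exactly what the correction term $e_{\corr,k,h}$ and Remark~\ref{remark-4.2}(ii) capture), so if it were present in $e^\T_{T_*,h}$ the estimate \textup{(\ref{eqn-3.22})} would simply be false for $h^{2/3}\ll\ell\ll h^{2/5}$; the proposition holds only because the small cutoff removes that contribution.

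For comparison, the paper's proof keeps only the short-time parametrix: after rescaling $x\mapsto\ell^{-1}x$, $y\mapsto\ell^{-1}y$, $\hbar=\ell^{-3/2}h$, it writes $e^\T_{T_*,h}$ via the geometric-optics construction of Section 2 of \cite{OOD}, with amplitude $B_0+B_1\hbar+B_2\hbar^2$, and compares it directly with $e^\W_h$: the $B_0$ term differs from the Weyl expression only through the nonlinearity of the eikonal, giving the first term of \textup{(\ref{eqn-3.23})}, bounded by $C\hbar^{-1}\ell^{\frac{1}{2}}$, while $B_1\hbar$ and $B_2\hbar^2$ (absent from the Weyl expression) give $C\hbar^{-1}\ell+C$; scaling back yields $Ch^{-1}+C\ell^{-2}$. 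No matching of Maslov factors or cancellation of short-ray stationary-phase contributions against Weyl is required, because the comparison is made at the level of the oscillatory integrals themselves. To salvage your route you would have to first show that the long stationary points lie outside the support of $\bar{\chi}_{T_*}$, and then replace the heuristic ``cancellation up to subleading terms of order $O(h^{-1/2}\ell^{-5/4})$'' at the short points by a quantitative phase-and-amplitude comparison of the type encoded in \textup{(\ref{eqn-3.23})}; as written, both the inclusion of the long points and the size you allow for the subleading and perturbation errors are incompatible with \textup{(\ref{eqn-3.22})} in the regime where it matters.
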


\begin{proof}
Following arguments of \cite{OOD}, Section~\ref{OOD-sect-2} one can prove easily that after rescaling the left-hand expression does not exceed
\begin{gather}
C \hbar^{-\frac{1}{2}}\ell^{0\,-\frac{3}{2}}  \min \Bigl( 1 , \frac{\ell^{0\,3} \ell }{\hbar}\Bigr) + C\hbar^{-1}\ell +C 
\label{eqn-3.23}
\end{gather}
where the first term is the difference between 
\begin{gather*}
(2\pi \hbar)^{-2}   \iint e^{i\hbar^{-1} (it\tau- \varphi^0(x,y,\theta))} B_0(x,y,\theta,t)\chi_{T_*/T} (t)\,d\theta dt
\end{gather*}
and $e^\W _h(x,y,0)$. Further,  the second term comes from the amplitude  $B_1\hbar $ in the same expression  (and acquires factor $\ell$ due to scaling of  $V_j$) and the third term comes from the amplitude $B_2\hbar^2$ in the same expression,
which are in $e^\T_{T_*,h}(x,y,0)$ but are skipped in $e^\W _h(x,y,0)$\, \footnote{\label{foot-5} These amplitudes $B_n(x,y,\theta,t)$  should not be confused with the amplitudes $b_n$  in the proof of Proposition~\ref{prop-3.3}.}  . 

The first term in (\ref{eqn-3.23})  does not exceed $C\hbar^{-1} \ell ^{\frac{1}{2}}$, so the left-hand expression of (\ref{eqn-3.22}) does not exceed
$C\hbar^{-1}\ell ^{\frac{1}{2}}+ C$. Scaling back (that is multiplying  by $\ell^{-2}$ and plugging $\hbar =\ell^{-\frac{3}{2}}h$) we get $Ch^{-1}  + C\ell^{-2}$.
\end{proof}

Combining Propositions~\ref{prop-3.6} and~\ref{prop-3.4}(ii) we arrive to
 
\begin{corollary}\label{corollary-3.7}
In the framework of Proposition~\ref{prop-3.6} 
\begin{gather}
|e_{h} (x,y,0)- e^\W _h(x,y,0)|\le Ch^{-\frac{1}{2}}\ell^{-\frac{5}{4}}+C\ell^{-2} + Ch^{-1}.
\label{eqn-3.24}
\end{gather}
In particular, for $\ell \ge h^{\frac{2}{5}}$ the right-hand expression is $Ch^{-1}$.
\end{corollary}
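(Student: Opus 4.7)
The plan is a straightforward triangle-inequality composition. I would introduce an intermediate Tauberian cutoff $T_*\asymp \sqrt{\ell^0(x,y)}$ — the short-trajectory scale already appearing in Propositions~\ref{prop-3.6} and~\ref{prop-3.4}(ii) — and write
\begin{multline*}
e_h(x,y,0) - e^\W_h(x,y,0) = \bigl(e_h - e^\T_{\epsilon_1, h}\bigr)(x,y,0)\\
 + \bigl(e^\T_{\epsilon_1, h} - e^\T_{T_*, h}\bigr)(x,y,0) + \bigl(e^\T_{T_*, h} - e^\W_h\bigr)(x,y,0),
\end{multline*}
then estimate the three pieces in turn.

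The first difference I would control by estimate (\ref{eqn-3.9}) of Proposition~\ref{prop-3.2} with $T=\epsilon_1$: since $\hbar = T^{-3}h \asymp h$, the term $Ch^{-2d/3}\hbar^s$ collapses to $O(h^s)$ by choosing $s$ large, leaving only the Tauberian residual $Ch^{1-d} = Ch^{-1}$. The second difference is exactly the conclusion of Proposition~\ref{prop-3.4}(ii), which in the regime $\ell^0 \le \epsilon\ell$ transfers the toy-model estimate (\ref{eqn-2.15}) to the general operator and supplies $Ch^{-1/2}\ell^{-5/4}$. The third difference is Proposition~\ref{prop-3.6} itself, contributing $Ch^{-1} + C\ell^{-2}$. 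Adding the three bounds yields (\ref{eqn-3.24}). For the \emph{in particular} clause, note that $h^{-1/2}\ell^{-5/4} \le h^{-1}$ is equivalent to $\ell \ge h^{2/5}$, while $\ell^{-2} \le h^{-1}$ is equivalent to $\ell \ge h^{1/2}$; since $2/5 < 1/2$ and $h\in(0,1)$ we have $h^{2/5} \ge h^{1/2}$, so the single hypothesis $\ell \ge h^{2/5}$ absorbs both extraneous terms into $Ch^{-1}$.

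There is no serious analytic obstacle here — the heavy lifting is already done in Propositions~\ref{prop-3.2}, \ref{prop-3.4}(ii), and~\ref{prop-3.6}. The only point worth checking carefully is that the implicit cutoff $T_*$ in Proposition~\ref{prop-3.6} is indeed the same short-trajectory scale $\sim\sqrt{\ell^0(x,y)}$ at which Proposition~\ref{prop-3.4}(ii) is stated, so that the middle term of the decomposition genuinely telescopes through a common scale rather than leaving a residual gap between two distinct Tauberian cutoffs.
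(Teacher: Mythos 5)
Your decomposition is exactly the paper's argument: Corollary~\ref{corollary-3.7} is obtained there precisely by combining Proposition~\ref{prop-3.6} with Proposition~\ref{prop-3.4}(ii), the preliminary passage from $e_h$ to $e^\T_{\epsilon_1,h}$ via (\ref{eqn-3.9}) being left implicit, so your proposal is correct and follows the same route. Concerning your final caveat: the paper's cutoff in Proposition~\ref{prop-3.6} is $T_*=\epsilon'\sqrt{\ell}$ (see Remark~\ref{remark-2.7}(iii) and the proof of Proposition~\ref{prop-3.8}) rather than $\sim\sqrt{\ell^0}$, but in the regime $\ell^0\le\epsilon\,\ell$ there are no trajectories, hence no stationary points, with travel times between $C_0\sqrt{\ell^0}$ and $\epsilon'\sqrt{\ell}$, so bridging the two Tauberian scales costs only a negligible non-stationary-phase contribution --- a point the paper itself passes over silently.
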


\section{Perturbation methods}
\label{sect-3.3}

Now we compare $e^\T_{T,h} (x,y, 0)$ with the same function for generalized  toy-model operator (\ref{eqn-2.31}) assuming that 
\begin{phantomequation}\label{eqn-3.25}\end{phantomequation}
\begin{gather}
V_{x_1}(0) =-1,\qquad  V_1=0, \qquad V_{2\, x_1}(0)=k
\tag*{$\textup{(\ref*{eqn-3.25})}_{1-3}$}
\end{gather}
because we can achieve $\textup{(\ref*{eqn-3.25})}_{1}$ by scaling and $\textup{(\ref*{eqn-3.25})}_{3}$ as a definition. Then

\begin{claim}\label{eqn-3.26}
The phase function $\varphi(x,y, \xi_1,\eta_1, \xi',\tau)$ in expression (\ref{eqn-3.11}) differs from the phase function $\bar{\varphi} (x,y, \xi_1,\eta_1, \xi',\tau)$  in \textup{(\ref{eqn-2.35})}
by $O(T^5)$ and amplitude $b  (x,y, \xi_1,\eta_1,\xi',\tau, h)$ differs from the  amplitude $\bar{b}_{k} (x,y, \xi_1,\eta_1,\xi',\tau,h)$ there by $O(T^2)$.
\end{claim}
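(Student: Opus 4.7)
The plan is to repeat item~(c) of the proof of Proposition~\ref{prop-3.3}, now with the simple toy-model $\bar A_h$ replaced by the generalized toy-model $\bar A_{k,h}$ of~\eqref{eqn-2.31}. The symbolic WKB construction \eqref{eqn-3.16}--\eqref{eqn-3.21} applies verbatim to $\bar A_{k,h}$, and by the explicit computation of Section~\ref{sect-2.3} its output is precisely the phase $\bar\varphi$ of~\eqref{eqn-2.35} together with an amplitude $\bar b_k$. The claim therefore reduces to showing that, under the extra normalization $\textup{(\ref{eqn-3.25})}_{1-3}$, the relative error between $A_h$ and $\bar A_{k,h}$ is $O(T^2)$, rather than the $O(T)$ obtained against $\bar A_h$ in Proposition~\ref{prop-3.3}.

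The first step is to bound the symbol discrepancy $\delta a(x,\xi):=a(x,\xi)-\bar a_k(x,\xi)$. By Proposition~\ref{prop-2.1} and Remark~\ref{remark-2.2}, on the bicharacteristics joining $x$ to $y$ on the energy level near $0$ in time $T$ one has $|x|\lesssim T^2$ and $|\xi|\lesssim T$. Under $\textup{(\ref{eqn-3.1})}_{1,2}$--$\textup{(\ref{eqn-3.3})}_{1-4}$ together with $\textup{(\ref{eqn-3.25})}_{1-3}$, and after passing (if necessary) to the gauge in which the linear part of the vector potential of $\bar A_{k,h}$ agrees with that of $A_h$, Taylor expansion gives
\begin{gather*}
V(x)+x_1=O(x_1|x|),\qquad V_j(x)-V_j^{(1)}(x)=O(|x|^2),\qquad g^{jk}(x)-\updelta_{jk}=O(|x|),
\end{gather*}
where $V_j^{(1)}$ is the linear part of $V_j$ carried by $\bar A_{k,h}$. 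Substituting into $a=g^{jk}(\xi_j-V_j)(\xi_k-V_k)+V$, the cross term $g^{jk}V_j\xi_k$ that dominated the simple-toy-model discrepancy at $O(T^3)$ is now $O(|x|^2)\cdot|\xi|=O(T^5)$, while the other contributions from $g^{jk}$ and from $V$ are $O(T^4)$. Since $|a|\asymp T^2$ on the relevant region, one obtains $|\delta a|/|a|=O(T^2)$.

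Second, exactly as in item~(c) of the proof of Proposition~\ref{prop-3.3}, the propagation of a relative $O(T^2)$ symbol perturbation through the eikonal equation~\eqref{eqn-3.17} and the transport hierarchy~\eqref{eqn-3.20}, with the common Cauchy data~\eqref{eqn-3.18}--\eqref{eqn-3.19}, makes the relative errors in $\varphi$ and in $b$ also $O(T^2)$. Combined with $\varphi=O(T^3)$ and $b=O(1)$, this yields the absolute bounds $|\varphi-\bar\varphi|=O(T^5)$ and $|b-\bar b_k|=O(T^2)$ stated in~\eqref{eqn-3.26}.

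The main obstacle is the gauge bookkeeping: the normalization $V_1\equiv 0$ of $\textup{(\ref{eqn-3.3})}_3$ is not directly compatible with the toy-model gauge $V_1=-kx_1$ of~\eqref{eqn-2.31}, so the identification of $V_j^{(1)}$ above must be carried out after an explicit gauge transformation that preserves the magnetic invariant $\partial_1 V_2-\partial_2 V_1=k$; this merely multiplies one representation of the propagator by a smooth phase factor and does not affect the error orders. Once this is done, the Duhamel argument is uniform throughout the range $h^{1/3}\lesssim T\lesssim 1$ by the same rescaling $x\mapsto T^{-2}x$, $\xi\mapsto T^{-1}\xi$, $h\mapsto\hbar=T^{-3}h$ used in Proposition~\ref{prop-3.3}.
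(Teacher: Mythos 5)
Your argument is correct and follows essentially the same route as the paper's own terse one-sentence justification (which parallels Part~(c) of Proposition~\ref{prop-3.3}): you Taylor-expand the symbol discrepancy to get $|a-\bar a_k|=O(T^4)$ hence a relative error $O(T^2)$, then transport that relative error through the eikonal/transport hierarchy and multiply by the sizes $\varphi=O(T^3)$, $b=O(1)$ to get the stated absolute bounds $O(T^5)$ and $O(T^2)$. The detailed term-by-term accounting (cross term $g^{jk}V_j\xi_k$ dropping from $O(T^3)$ to $O(T^5)$ once the linear part of $V_j$ is absorbed into the generalized toy model, the $g^{jk}$ and $V$ discrepancies at $O(T^4)$) is exactly what the paper leaves implicit. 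One small note: the gauge issue you flag appears to be an artifact of a typo in the prose preceding \textup{(\ref{eqn-2.31})} (``$V_1=-kx_1$'' there should read $V_2=kx_1$, $V_1=0$, consistent with the displayed formula and with $\textup{(\ref{eqn-3.25})}_{2,3}$), so no gauge transformation is actually needed; your hedged treatment is harmless and does not affect the conclusion.
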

 
 Indeed, in comparison  to the Part (c) of the proof of Proposition~\ref{prop-3.4}  now the  relative error in $A_h$ in comparison to generalized toy-model operator $\bar{A}_{k,h}$ is $O(T^2)$ we conclude that the relative errors in  $\varphi$ and $B_h$ are also $O(T^2)$.

Let us assume  that
\begin{gather}
\ell (x,y) \le h^{\frac{2}{5}}.
\label{eqn-3.27}
\end{gather}

\begin{proposition}\label{prop-3.8}
Let  \textup{(\ref{eqn-3.25})} and \textup{(\ref{eqn-3.27})} be fulfilled. Then  
\begin{enumerate}[wide,label=(\alph*),  labelindent=0pt]
\item
Asymptotics 
\begin{multline}
\bigl(e^\T_{T^*,h} (x,y,0) - e^\W_{h} (x,y,0)\bigr)- \bigl(\bar{e}^\T_{k,h}(x,y,0)- \bar{e}^\W_{k,h} (x,y,0)\bigr) \\
= O(h^{-1})
\label{eqn-3.28}
\end{multline}
holds  for all   $x,y\in B(0,\epsilon )\colon |x-y|\le \epsilon \ell(x,y)$.

\item
Estimates
\begin{multline}
|e^\T_{T^*, h} (x,y, 0)-\bar{e}^\T_{k,T^*, h}(x,y,0)|\\
\le 
\left\{\begin{aligned}
&Ch^{-\frac {1}{2}} \ell^{-\frac{5}{4}} \times \ell^{\frac{5}{2}}h^{-1}=O(h^{-1})               &&\text{ in the regular zone},\\
&Ch^{-\frac {2}{3}} \ell^{-1} \times \ell^{\frac{5}{2}}h^{-1}=O(h^{-\frac{5}{3}}\ell^{\frac{3}{2}}) &&\text{ in the singular zone},\\
&Ch^{(2s-2)/3}\ell ^{-s} &&\text{ in the shadow zone}
\end{aligned}\right.
\label{eqn-3.29}
\end{multline}
and
\begin{gather}
e^\W_{ h} (x,y,0)-\bar{e}^\W_{k, h}(x,y,0) =O(h^{-1})
\label{eqn-3.30}
\end{gather}
hold for all   $x,y\in B(0,\epsilon )\colon |x-y|\ge \epsilon \ell(x,y)$ where $\bar{e}_{k,h}(x,y,\tau )$,  $\bar{e}^\W_{k,h} (x,y,\tau)$ and $ \bar{e}^\T_{k,T ,h} (x,y,\tau)$ are defined for a generalized toy-model operator \textup{(\ref{eqn-2.31})}.
\end{enumerate}
\end{proposition}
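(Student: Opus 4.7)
The plan is to compare the oscillatory integral representations of $e^\T_{T^*,h}$ and $\bar e^\T_{k,T^*,h}$ directly. By Proposition~\ref{prop-3.3} together with claim~(\ref{eqn-3.26}), with $T=T^*\asymp\sqrt{\ell}$, both $e^\T_{T^*,h}(x,y,0)$ and $\bar e^\T_{k,T^*,h}(x,y,0)$ are oscillatory integrals whose phases $\varphi,\bar\varphi$ differ by $O(T^5)=O(\ell^{5/2})$ and whose amplitudes $b,\bar b$ differ by $O(T^2)=O(\ell)$. Under hypothesis~(\ref{eqn-3.27}), the ratio $\ell^{5/2}/h$ is bounded by $1$, which is exactly what makes the perturbative comparison work.

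For the \emph{Tauberian comparison}, I write
\begin{gather*}
e^{ih^{-1}\varphi}b-e^{ih^{-1}\bar\varphi}\bar b = e^{ih^{-1}\bar\varphi}\Bigl[(b-\bar b)\,e^{ih^{-1}(\varphi-\bar\varphi)}+\bar b\bigl(e^{ih^{-1}(\varphi-\bar\varphi)}-1\bigr)\Bigr]
\end{gather*}
and expand the inner exponential in a finite Taylor series; this reduces $e^\T_{T^*,h}-\bar e^\T_{k,T^*,h}$ to a finite sum of oscillatory integrals sharing the toy-model phase $\bar\varphi$, whose amplitudes carry small prefactors of the form $\ell$ or $\ell^{5/2}/h$ (higher powers give even better estimates). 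The toy-model stationary-phase analysis of Proposition~\ref{prop-2.4} then applies unchanged to each of these integrals, yielding bounds equal to the Proposition~\ref{prop-2.4} estimates multiplied by the corresponding prefactor. This produces exactly the three cases in~(\ref{eqn-3.29}) for part~(b), and the regular-zone bound $O(h^{-1})$ needed in part~(a).

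For the \emph{Weyl comparison}, the kernel $e^\W_h(x,y,0)$ depends on $g^{jk}$, $V_j$, $V$ only through their midpoint values at $\tfrac12(x+y)$; under the normalizations~(\ref{eqn-3.1}) and~(\ref{eqn-3.25}) these deviate from the generalized-toy-model choices by $O(\ell^2)$. Substituting this into the explicit formula~(\ref{eqn-2.29}) and its generalized-toy-model analogue and rescaling by $\hbar=\ell^{-3/2}h$ (as in Proposition~\ref{prop-2.6}), one checks that $e^\W_h-\bar e^\W_{k,h}=O(h^{-1})$ under~(\ref{eqn-3.27}). This gives~(\ref{eqn-3.30}) and supplies the Weyl contribution in~(\ref{eqn-3.28}); combined with the Tauberian step it establishes~(\ref{eqn-3.28}).

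The delicate point will be the Tauberian comparison in the \emph{singular zone}, where stationary points of $\bar\varphi$ nearly coalesce and classical stationary phase loses a power of $\hbar^{1/6}$. The perturbed amplitude must be tracked uniformly in the small parameter $\ell^{5/2}/h$; since this parameter enters only as a multiplicative prefactor and the $\beta$-scale structure of~(\ref{eqn-2.42}) is preserved under the perturbation, I expect the Airy-type bound behind Proposition~\ref{prop-2.4}(ii) to go through with exactly the prefactor claimed in the singular-zone line of~(\ref{eqn-3.29}), and analogously for the shadow zone.
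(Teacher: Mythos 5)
Your treatment of part (b) is in essence the paper's own: the oscillatory-integral comparison via Proposition~\ref{prop-3.3} and claim~(\ref{eqn-3.26}) with a Taylor expansion of the inner exponential, followed by the stationary-phase case analysis as in Proposition~\ref{prop-2.4}, is precisely the argument behind~(\ref{eqn-3.29}) (which the paper identifies with the explicit bound~(\ref{eqn-2.43})), and the Weyl comparison for $|x-y|\ge\epsilon\ell$ is estimate~(\ref{eqn-3.33}).

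The gap is in part~(a). You decompose $\bigl(e^\T-e^\W\bigr)-\bigl(\bar e^\T_k-\bar e^\W_k\bigr)=\bigl(e^\T-\bar e^\T_k\bigr)-\bigl(e^\W-\bar e^\W_k\bigr)$ and try to show each parenthesis separately is $O(h^{-1})$. That cannot work on the full range $\ell\le h^{2/5}$. As the paper notes in Remark~\ref{remark-3.9}(i), for $|x-y|\le\epsilon\ell$ one only has $e^\W_h-\bar e^\W_{k,h}=O(h^{-2}\ell^2)$, and for $h^{1/2}\lesssim\ell\le h^{2/5}$ this is of size up to $h^{-6/5}\gg h^{-1}$; correspondingly $e^\T_{T^*,h}-\bar e^\T_{k,T^*,h}$ is just as large in this regime (it tracks the Weyl difference). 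The statement of~(\ref{eqn-3.28}) is intrinsically about the \emph{double} difference, and the cancellation between the Tauberian and Weyl errors is essential. The paper captures it by combining the time-localized comparison~(\ref{eqn-3.31}) with Proposition~\ref{prop-3.6} (which bounds $e^\T_{T_*,h}-e^\W_h$ via the amplitude decomposition $B_0+B_1\hbar+B_2\hbar^2$ of~(\ref{eqn-3.23})); and for $\ell\le h^{1/2}$, where even this gives $C\ell^{-2}>Ch^{-1}$, it refines the comparison using $B_n-\bar B_{k,n}=O(\ell^{1-3n/2})$ (that the rescaled operators differ by $O(\ell^2)$ rather than $O(\ell)$), yielding~(\ref{eqn-3.32}). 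Neither the coupled bound nor the small-$\ell$ refinement appears in your plan.

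A secondary point: Proposition~\ref{prop-2.4}(i) bounds the intermediate-time contribution $e^\T_{\epsilon_1,h}-e^\T_{T,h}$, not $e^\T_{T^*,h}$ itself; the short-time part of the Tauberian expression is of Weyl magnitude, which is why its comparison with the toy model cannot be done independently of the Weyl subtraction when $|x-y|\le\epsilon\ell$. Your ``regular-zone bound $O(h^{-1})$ for part~(a)'' therefore does not have the meaning you attribute to it.
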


\begin{proof}
\begin{enumerate}[wide,label=(\alph*),  labelindent=0pt]
\item
Observe that $A_h-\bar{A}_{k,h} = O(\ell^2)$. Indeed,  when we replace $V$ by $x_1$ the error is $O(\ell^2)$ due to $\textup{(\ref*{eqn-3.25})}_{1}$ and when we replace
$V_1$ by $kx_1$ the error is  $O(\ell^{\frac{5}{2}})$ due to $\textup{(\ref*{eqn-3.25})}_{2-3}$ where extra factor $\ell^{\frac{1}{2}}$ comes from factor $\xi_2$. 

Therefore as  $|x-y|\le \epsilon \ell(x,y)$ the following estimate holds
\begin{multline}
|\bigl(e^\T_{T^*,h} (x,y,0 )- e^\T_{T_*,h} (x,y,0 )\bigr) - \bigl( \bar{e}^\T_{k,T^*,h} (x,y,0)- \bar{e}^\T_{k,T_*,h} (x,y,0) \bigr)|\\
\le Ch^{-1} 
\label{eqn-3.31}
\end{multline}
and as $|x-y|\ge \epsilon \ell(x,y)$ estimate (\ref{eqn-3.30}) holds, 
where we recall that $T^*= C_0\sqrt{\ell}$ and $T_*=\epsilon ' \sqrt{\ell}$ with arbitrarily small constant $\epsilon'$. The proof is similar to the proof of Proposition~\ref{prop-2.8}
and the right-hand expression in (\ref{eqn-3.29}) is exactly expression (\ref{eqn-2.43}).

\item
Further,  if $|x-y|\le \epsilon _1 \ell(x,y)$ then combining (\ref{eqn-3.31}) and (\ref{eqn-3.22}) 
we conclude that the left-hand expression of (\ref{eqn-3.27}) does not exceed $Ch^{-1}+C\ell^{-2}$ which implies (\ref{eqn-3.27})  for $\ell \ge h^{\frac{1}{2}}$. 

However, for $\ell (x,y)\le h^{\frac{1}{2}}$ we need more subtle arguments.  Namely, recall that $C\ell^{-2}$ comes from $\hbar^0 \times \ell^{-2}$ which in turn comes from the decomposition (\ref{eqn-3.16}) with $B_n = O(\ell^{-\frac{3n}{2}})$. However since operators $A_h$ and $\bar{A}_{k,h}$ after rescaling differ by $O(\ell^2)$ rather than $O(\ell)$ we can estimate $B_n - \bar{B}_{k,n}= O(\ell^{1-\frac{3n}{2}})$ and therefore 
\begin{multline}
|\bigl(e^\T _{T_*,h} (x,y,0) - e^\W_{h} (x,y,0)\bigr)- \bigl(\bar{e}^\T_{T_*,k,h}(x,y,0)- \bar{e}^\W_{k,h} (x,y,0)\bigr)|\\
\le Ch^{-1} + C\ell^{-\frac{3}{2}}\le Ch^{-1}.
\label{eqn-3.32}
\end{multline}

\item
On the other hand,  if $\ell^0(x,y)\ge \epsilon _1 \ell(x,y)$ then     one can prove easily
  \begin{gather}
 |e^\W_{h} (x,y,0) - \bar{e}^\W_{k,h} (x,y,0)| \le Ch^{-1} + C\ell^{-\frac{3}{2}}\le Ch^{-1}.
\label{eqn-3.33}
\end{gather}
\end{enumerate}\vspace{-\baselineskip}\ 
\end{proof}

\begin{remark}\label{remark-3.9}
\begin{enumerate}[wide,label=(\roman*),  labelindent=0pt]
\item
One can prove easily that 
\begin{gather}
e^\W_{ h} (x,y,0)-\bar{e}^\W_{k,  h}(x,y,0) =O(h^{-2}\ell^2)
\label{eqn-3.34}
\end{gather}
even as $|x-y|\ge \epsilon \ell(x,y)$. Therefore
\begin{gather}
e_h(x,y,0) = \bar{e}_{k,h}(x,y,0)+O(h^{-1})\qquad \text{as\ \ } \ell(x,y)\le h^{\frac{1}{2}}.
\label{eqn-3.35}
\end{gather}
However, one can easily note that it may fail as  $\ell(x,y)\gg h^{\frac{1}{2}}$. Thus, as $h^{\frac{1}{2}}\le \ell(x,y)\le h^{\frac{2}{5}}$ we need to use both terms 
$e^\W_h(x,y,0)$ and $e_{\corr,k,h}(x,y,0)$ in the asymptotics.

\item
We can replace $k$ by $k=0$ and preserve remainder estimate  $O(h^{-1})$ if and only if $|k| |x_2-y_2|\lesssim \ell^{\frac{1}{4}}h^{\frac{1}{2}}$. In particular, we can do it as $x_2=y_2$. 

\item
We know that in any dimension $d\ge 2$ \ $e_h(x,y,0)$ is $O(h^s)$ if \underline{either} $x_1< - h^{\frac{2}{3}-\delta}$ \underline{or} $y_1< - h^{\frac{2}{3}-\delta}$ \underline{or}
$4x_1y_1\le (1-\epsilon) |x'-y'|^2 -h^{\frac{4}{3}-\delta}$ but it is not the case for $e_h^\W(x,y,0)$!
\end{enumerate}
\end{remark}

\section{Proof of Theorem~\ref{thm-1.2}}
\label{sect-3.4}

\begin{proof}[Proof of Theorem~\ref{thm-1.2}]
Without any loss of the generality one can assume that $\tau=0$, and assumptions $\textup{(\ref*{eqn-3.1})}_{1,2}$, $\textup{(\ref*{eqn-3.2})}_{1-3}$, 
$\textup{(\ref*{eqn-3.3})}_{1-4}$ and $\textup{(\ref*{eqn-3.25})}_{1-3}$ are fulfilled. Then  the left-hand expression in (\ref{eqn-1.11}) is 
$(x_1-x_2)^2-4x_1y_1 $ modulo $O(\ell^3)$.

As $\ell(x,y)\ge h^{\frac{2}{3}-\delta}$ we can replace in the Tauberian expressions $T=C_0\sqrt{\ell}$ by $T=\epsilon_1$ with
$O(h^{s})$ error. Then  
\begin{enumerate}[wide,label=(\alph*),  labelindent=0pt]
\item
Statements (i) and (ii) of Theorem~\ref{thm-1.2}   follow immediately  from Proposition~\ref{prop-3.4}.

\item
Statements (iii) and (iv) of Theorem~\ref{thm-1.2}   follow immediately  from Proposition~\ref{prop-3.8}.

\item
Further, as $\ell (x,y)\le h^{\frac{2}{3}-\delta}$ we can take $T^*=h^{\frac{1}{3}-\delta}$ and apply the same arguments.

\item
Finally, Statement (v) of Theorem~\ref{thm-1.2} follows from (\ref{eqn-2.44}) and (\ref{eqn-2.28}).
\end{enumerate}
\end{proof}

\chapter{Generalizations and final remarks}
\label{sect-4}

\begin{remark}\label{remark-4.1}
\begin{enumerate}[wide,label=(\roman*),  labelindent=0pt]
\item
In any dimension $d\ge 1$  without assumption  (\ref{eqn-1.3}) the simple rescaling with the scaling function 
\begin{gather}
\gamma_x=(\epsilon |V(x)-\tau|+h^{\frac{2}{3}} )
\label{eqn-4.1}
\intertext{results in the estimate}
|e_h(x,y,\tau)-e^\W_h (x,y,\tau)|\le Ch^{1-d}\gamma _x ^{(d-3)/2}
\label{eqn-4.2}
\end{gather}
as long as $\ell(x,y) \le  \gamma_x $ (and then   $\gamma_y\asymp \gamma_x$)\,\footnote{\label{foot-6} As $d=1$ Weyl expression should be modified so it contains a true eikonal  rather than  $\langle x-y,\theta\rangle - a(\frac{1}{2} (x+y),\theta)$ approximation.}.

\item
To explore the case $\ell(x,y) \ge \gamma_x$ observe that the standard Tauberian method, Part I, implies
\begin{gather}
e_h(x,x,\tau+h \gamma_x^{-\frac{1}{2}})   -e_h(x,x,\tau)\le Ch^{1-d}\gamma_x  ^{(d-3)/2} 
\label{eqn-4.3}\\
\shortintertext{Then}
|e_h(x,y,\tau+h \gamma_*^{-\frac{1}{2}} )  -e_h(x,y,\tau)|\le Ch^{1-d}\gamma ^{*\, (d-1)/4} \gamma_*^{(d-5)/4}
\label{eqn-4.4}
\end{gather}
with $\gamma^*=\max (\gamma_x,\,\gamma_y)$ and $\gamma_*=\min (\gamma_x,\,\gamma_y)$.

Applying standard Tauberian methods, Part II, we arrive to the estimate
\begin{multline}
|e_h(x,y,\tau)| \le CT_{x,y}^{-1} \gamma_* ^{\frac{1}{2}} \times h^{1-d}\gamma ^{*\, (d-1)/4} \gamma_*^{(d-5)/4} \\
\le C h^{1-d}\gamma ^{*\, (d-3)/4} \gamma_*^{(d-3)/4}=C h^{1-d}\gamma_x ^{(d-3)/4} \gamma_y^{(d-3)/4}
\label{eqn-4.5}
\end{multline}
where $T_{x,y}\gtrsim |x-y|^{\frac{1}{2}} \ge \gamma^{*\,\frac{1}{2}}$ is the minimal propagation time between $x$ and $y$ on the energy level $\tau$.
\end{enumerate}
Therefore, in the worst case remainder estimate is $O(h^{-\frac{2}{3}})$ for $d=1$, $O(h^{-\frac{4}{3}})$ for $d=2$ and $O(h^{1-d})$ for $d\ge 3$.
\end{remark}

\begin{remark}\label{remark-4.2}
\begin{enumerate}[wide,label=(\roman*),  labelindent=0pt]
\item
Under assumption  (\ref{eqn-1.3}) following arguments of Proposition~\ref{prop-3.2} we upgrade (\ref{eqn-4.3})  to 
\begin{gather}
e_h(x,x,\tau+h)   -e_h(x,x,\tau)\le Ch^{1-d}\gamma_x  ^{(d-2)/2} 
\label{eqn-4.6}\\
\shortintertext{and then}
|e_h(x,y,\tau)-e^\T_{\epsilon_1,h} (x,y,\tau)|\le Ch^{1-d}\gamma_x  ^{(d-2)/4}\gamma_y  ^{(d-2)/4}.
\label{eqn-4.7}
\end{gather}
Therefore, in the worst case remainder estimate is $O(h^{-\frac{1}{3}})$ for $d=1$, $O(h^{-1})$ for $d=2$ and $O(h^{1-d})$ for $d\ge 3$.

\item
However to replace $e^\T_{\epsilon_1,h} (x,y,\tau)$ by  $e^\W_{h} (x,y,\tau)$ and preserve this remainder estimate  we may need to add a correction term (in any dimension!) which in the regular zone is of magnitude  $h^{(1-d)/2}\ell(x,y)^{-(d+ 3)/4}$ and in the singular zone is $O( h^{(2-3d)/6}\ell(x,y)^{-(d+2)/4})$\, \footnote{\label{foot-7} As $d=1$ there is no shadow zone but there still is a singular zone where $\gamma_x\not\asymp\gamma_y$.}. For toy-model (\ref{eqn-2.1}) it is
\begin{multline}
e_{\corr, h}(x,y, \tau)=\frac{2}{(2\pi)^{\frac{d}{2}}\Gamma(\frac{d}{2}+1) h^{d+1}} \int_{-\infty}^\tau \int_0^\infty  (\tau-\tau')^{\frac{d}{2}} \\
\times 
 \cos\Bigl[h^{-1}\Bigl(\beta (x_1+y_1+2\tau) +\frac{1}{4\beta}  \bigl((x_1-y_1)^2 +|z'|^2\bigr)-\frac{1}{3}\beta^3\bigr) \Bigr) -\frac{\pi}{4}(d+2)\Bigr]\,d\beta d\tau\\
 -e^\W_h(x,y,\tau)
 \label{eqn-4.8}
\end{multline}
with $z'=x'-y'$. Then $e_{\corr,h} (x,x,\tau)$ can be expressed through  Airy function. 
\end{enumerate}
\end{remark}

\begin{remark}\label{remark-4.3}
To get rid off condition (\ref{eqn-1.3}) we can use  the simple rescaling with the scaling function
\begin{gather}
\gamma_{2x} =(\epsilon |V(x)-\tau| + |\nabla V|^2 +h )^{\frac{1}{2}}; 
\label{eqn-4.9}
\end{gather}
then $\hbar =\gamma_{2x}^{-2}h$ and the remainder estimates are not spoiled in dimensions $d\ge 2$, but in dimension $d=1$ it becomes 
$O(\hbar^{-\frac{1}{3}}\gamma_{2x}^{-1})= O(h^{-\frac{1}{3}}\gamma_{2x}^{-\frac{1}{3}})$ which is $O(h^{-\frac{1}{2}})$ in the worst case.
\end{remark}

\bibliographystyle{amsplain}

\end{document}